\documentclass[a4paper, reqno]{amsart}

\usepackage[T1]{fontenc}
\usepackage{lmodern, microtype, amssymb, inconsolata}
\usepackage[scr=dutchcal]{mathalpha}
\usepackage[dvipsnames]{xcolor}
\usepackage[colorlinks=true, citecolor=RoyalBlue, linkcolor=black]{hyperref}
\usepackage{tikz-cd}
\usepackage{changepage}

\microtypecontext{spacing=nonfrench}
\microtypesetup{stretch=20, shrink=15, spacing=true}
\tikzcdset{arrow style=tikz, diagrams={>={Computer Modern Rightarrow[length=5pt,width=4pt]}}}
\newtheorem{theorem}{Theorem}[section]
\newtheorem*{main-theorem}{Main theorem}
\newtheorem{proposition}[theorem]{Proposition}
\newtheorem{corollary}[theorem]{Corollary}
\newtheorem{lemma}[theorem]{Lemma}

\theoremstyle{definition}
\newtheorem*{lubin-conjecture}{Lubin's conjecture}
\newtheorem*{up}{Universal property}

\theoremstyle{remark}
\newtheorem{remark}[theorem]{Remark}

\DeclareMathOperator\bt{BT}
\DeclareMathOperator\BT{\mathbf{BT}}

\DeclareMathOperator\D{\mathbf{D}}
\DeclareMathOperator\Dsp{Dsp}
\DeclareMathOperator\End{End}
\DeclareMathOperator\FGL{\mathbf{FGL}\!}
\DeclareMathOperator\Fil{Fil}
\DeclareMathOperator\FIL{\mathbf{Fil}}
\DeclareMathOperator\Frac{Frac}
\DeclareMathOperator\Gal{Gal}
\DeclareMathOperator\Hom{Hom}
\DeclareMathOperator\Log{Log}
\DeclareMathOperator\MF{\mathbf{MF}}
\DeclareMathOperator\REP{\mathbf{Rep}}
\DeclareMathOperator\Stab{Stab}
\DeclareMathOperator\T{T}
\DeclareMathOperator\V{V}
\DeclareMathOperator\W{W}

\def\Aty{A_\infty}
\def\AinfL{A_{\mathrm{inf},L}}
\def\BinfL{B_{\mathrm{inf},L}}
\def\Acris{A_\mathrm{cris}}
\def\Bcris{B_\mathrm{cris}}
\def\BdR{B_\mathrm{dR}}

\title[Lubin's conjecture: height $1$ over cyclo-tame extensions]{Lubin's conjecture for height-one $p$-adic dynamical systems over cyclo-tame extensions}
\author{Martin Debaisieux}
\address{Département de Mathématique, Université de Mons, 7000 Mons, Belgium}
\email{martin.debaisieux@umons.ac.be}
\thanks{The author was supported by the FRIA of the Fonds de la Recherche Scientifique -- FNRS}
\subjclass{\texttt{11S82}, \texttt{11S31}, \texttt{37P20}; \texttt{14L05}, \texttt{11F80}, \texttt{11F85}}
\date{\today}
\keywords{$p$-adic dynamical system, $p$-adic Hodge theory, formal group}

\begin{document}

\begin{abstract}
    Let $K/\mathbb{Q}_p$ be a finite extension whose ramification index is coprime to $p^2-p$. We study height-one commuting pairs $(f, u)$ of noninvertible and invertible formal power series defined over the ring of integers $\mathcal{O}_K$ of $K$. We begin by extracting a crystalline character of weight $1$ from the $\Gal(\overline K/K)$-set $T_f$ of $f$-consistent sequences. This character is used in order to equip $T_f$ with a $\mathbb{Z}_p$-module structure for which $f$ is an endomorphism. We then apply explicit functors in integral $p$-adic Hodge theory to $T_f$ to recover a formal group defined over $\mathcal{O}_K$ for which $(f, u)$ is a pair of endomorphisms. This proves new cases of a conjecture of Lubin.
\end{abstract}

\begin{adjustwidth}{-2mm}{-2mm}
    \maketitle
\end{adjustwidth}

\section*{Introduction}

\noindent Pairs of analytic transformations of a $p$-adic open unit disk with a fixed point at $0$ that commute under composition were studied by Lubin in \cite{Lub94} under the name of $p$-adic dynamical systems. He pointed out in \textit{ibidem} that ‘‘for an invertible series to commute with a noninvertible series, there must be a formal group somehow in the background''. This observation has initiated a wave of results in this direction, see for instance \cite{Li96}, \cite{Li02}, \cite{Sar05}, \cite{Sar10}, \cite{SS13}, \cite{Ber19}. More recently, $p$-adic Hodge theory has proved to be an efficient way for tackling this problem, as in the work of \cite{Ber16}, \cite{Spe18}, \cite{Poy24}, and this paper is written in this vein.

\begin{lubin-conjecture}[{\cite[p. 131]{Sar05}}]
    Suppose that $f$ and $u$ are a noninvertible and a nontorsion invertible series, respectively, defined over the ring of integers~$\mathcal{O}$ of a finite extension of $\mathbb{Q}_p$. Suppose further that the roots of $f$ and all of its iterates are simple, and that $f'(0)$ is a uniformizer in $\mathbb{Z}_p$. If $f \circ u = u \circ f$, then $f, u \in \End_\mathcal{O}(F)$ for some formal group $F$ over $\mathcal{O}$.
\end{lubin-conjecture}

Throughout this article, we refer to the above statement as Lubin's conjecture. The height of the dynamical system together with its ring of definition is a common way to decompose this conjecture into cases. The case of height $1$ over $\mathbb{Z}_p$ is solved by Specter in \cite{Spe18} where he recovered the underlying Lubin-Tate power series. Shortly after, Berger solved in \cite{Ber19} all the cases of finite height over $\mathbb{Z}_p$ using only $p$-adic analysis. In this article, we solve the height-one cases over cyclo-tame extensions, that is, extensions whose ramification index is coprime to $p^2-p$, provided that the linear coefficient of the invertible series lies in $\mathbb{Z}_p$. The remaining cases of Lubin's conjecture are still open nowadays. Our approach consists in generalizing Specter's analysis in \cite{Spe18} of the set of consistent sequences of the dynamical~sys\-tem which yields a Galois character, recovering the missing $\mathbb{Z}_p$-module structure on it and, from this data, retrieving the latent formal group of the dynamical system via integral $p$-adic Hodge-theoretic methods.

    \subsection*{Notations} Let $p$ be a prime number and fix an algebraic closure of $\mathbb{Q}_p$ containing all our extensions. Define $\mathbb{C}_p$ to be its $p$-adic completion, with open unit disk $\mathfrak{m}_{\mathbb{C}_p\!}$. When $E$ is a finite extension of $\mathbb{Q}_p$, we let $\mathcal{O}_E$ be its ring of integers, with maximal ideal $\mathfrak{m}_E$, and we let $e_E$ be the ramification index of the extension or simply $e$ if it is unambiguous. Given a ring $R$, we define $\mathscr{S}_0(R) = XR[\![X]\!]$ to be the set of formal power series over $R$ without constant term. Composition induces a law on $\mathscr{S}_0(R)$ making it into a noncommutative monoid, and a necessary and sufficient condition on $s \in \mathscr{S}_0(R)$ to be invertible is that $s'(0)$ is a unit of $R$. We deal exclusively with rings $R$ arising from a $p$-adic situation, that is, $p$-adic fields, their integers and finite fields of characteristic $p$.

    \subsection*{Motivation and main statement} Before stating our main theorem, we remind the reader of some elementary results about formal groups, see also \cite{Haz12} for an extended presentation. By a formal group, we mean a one-dimensional commutative formal group law. Given a formal group $F$ defined over the ring of integers $\mathcal{O}_K$ of a finite extension $K$ of $\mathbb{Q}_p$, its ring of $\mathcal{O}_K$-endomorphisms, namely $\End_{\mathcal{O}_K}(F)$, is a composition-commuting family of noninvertible and invertible formal power series, provided with an embedding of $\mathbb{Z}_p$ sending $m \in \mathbb{Z}_p$ to $[m]_F$, the multiplication-by-$m$ endomorphism in $F$. The map taking the linear coefficient in $\End_{\mathcal{O}_K}(F)$ is injective. If we further assume that $F$ has height $1$ and $K$ is a nontrivial extension of $\mathbb{Q}_p$,~then $F$ is not a Lubin-Tate formal group and this is the major difference with Specter's case. What remains true is that $\End_{\mathcal{O}_K}(F)$ is a free $\mathbb{Z}_p$-algebra of rank $1$, implying that the linear coefficient of every $\mathcal{O}_K$-endomorphism of $F$ lies in $\mathbb{Z}_p$. We will refer to this fact as (\hypertarget{fact-1}{1}). An important data attached to the formal group $F$ is its $p$-adic Tate module
    \[
        \T_p(F) = \varprojlim_{x \mapsto [p]_F(x)} F[p^n]
    \]
    where $F[p^n]$ is the group of $p^n$-torsion points of $F$ in $\mathfrak{m}_{\mathbb{C}_p\!}$ for every integer $n \geqslant 0$. This is a crystalline character of the absolute Galois group $G_K$ of $K$ of Hodge-Tate weight $1$, meaning that $\T_p(F)$ is a free $\mathbb{Z}_p$-module of rank $1$ equipped with an action of $G_K$ that is crystalline of weight $1$. By \cite[Corollary 6.2.3]{SW13} what makes this object relevant is that the functor
    \[
        \begin{tabular}{lcl}
            $\T_p\colon\;$Formal Groups of Height $1$ & $\longrightarrow$ & Crystalline Characters of $G_K$\\
            \phantom{$\T_p\colon\;$}over $\mathcal{O}_K$ & & of Hodge-Tate Weight $1$
        \end{tabular}
    \]
    is an equivalence. Indeed, by \cite{Tat67} height-one formal groups correspond to height-one connected $p$-divisible groups, giving rise to crystalline characters of weight $1$, and \cite[Theorem 5.3.2]{Bre00} when $p \neq 2$ and \cite[Corollary 2.2.6]{Kis06} for any $p$ show that any such character arises from such a $p$-divisible group. Now sufficiently explicit functors to recover a formal group from the right-hand side can be extracted from the work of Kisin \cite{Kis06}, \cite{Kis09} and Zink \cite{Zin01}, \cite{Zin02} in integral $p$-adic Hodge theory.

    Let $f \in \mathscr{S}_0(\mathcal{O}_K)$ be a power series whose reduction modulo $\mathfrak{m}_K$ is nonzero, such that the roots of $f$ and all of its iterates in $\mathfrak{m}_{\mathbb{C}_p\!}$ are simple, $f$ has exactly $p$ of them and with $f'(0)$ a uniformizer in $\mathbb{Z}_p$. Motivated by the above discussion, we consider the $G_K$-set
    \[
        T_f = \varprojlim_{x \mapsto f(x)} \Lambda_n(f)
    \]
    of $f$-consistent sequences of elements in $\mathfrak{m}_{\mathbb{C}_p\!}$ (see \cite[p. 329]{Lub94}), where $\Lambda_n(f)$ is the set of roots of $f^{\circ n}$ in $\mathfrak{m}_{\mathbb{C}_p\!}$ for every integer $n \geqslant 0$. If Lubin's conjecture holds, $T_f$ should be the $p$-adic Tate module of an integral formal group once $f$ commutes with a nontorsion invertible power series $u \in \mathscr{S}_0(\mathcal{O}_K)$. Therefore there should exist a compatible $\mathbb{Z}_p$-module structure on $T_f$ turning it into a crystalline character of weight $1$ and admitting $f$ as an endomorphism. This is what we achieve to recover, for a restriction of $G_K$ from $K$ to a finite extension $L$, when $(e_K, p^2-p) = 1$. Based on (\hyperlink{fact-1}{1}), we assume as well that $u'(0) \in \mathbb{Z}_p^\times$. Although this assumption is reasonable, the author regrets not knowing if it can be deduced from the original hypotheses. The $\mathbb{Z}_p[G_L]$-module structure on $T_f$ allows to recover a formal group over $\mathcal{O}_L$ for which $(f, u)$ is a pair of endomorphisms. A uniqueness argument then implies that this formal group is defined over $\mathcal{O}_K$.

    \begin{main-theorem}[\ref{R: main-theorem}]
        Let $K$ be a finite extension of $\mathbb{Q}_p\!$ satisfying $(e_K, p^2-p) = 1$. Let $(f, u)$ be a commuting pair of elements in $\mathscr{S}_0(\mathcal{O}_K)\!$ such that $f \bmod \mathfrak{m}_K \neq 0$, the roots of $f$ and all of its iterates in $\mathfrak{m}_{\mathbb{C}_p\!}$ are simple and $f$ has $p$ of them, with $f'(0)$ a uniformizer in $\mathbb{Z}_p\!$ and $u'(0) \in \mathbb{Z}_p^\times\!$ nontorsion. Then there exists a formal group $F$ defined over $\mathcal{O}_K\!$ such that $f, u \in \End_{\mathcal{O}_K}(F)$.
    \end{main-theorem}

    \subsection*{The case of small ramification (\texorpdfstring{$e < p-1$}{e < p-1})} Our work has a nontrivial intersection with the one of Berger in \cite{Ber19}. If we assume $(f, u)$ to be a commuting pair over $\mathcal{O}_K$ as in our main theorem with $(e, p^2-p) = 1$ and that $e < p-1$, then the same arguments as for \cite[Corollary A]{Ber19} can be repeated to show that $(f, u)$ is a pair of endomorphisms of a formal group defined over $\mathcal{O}_K$.

    \subsection*{Structure of the article} In the first section, we begin by recalling some invariants and relations attendant to a $p$-adic dynamical system $(f, u)$ from \cite{Lub94}. Using the main theorem of \cite{Li96}, we determine the decreasing part of the Newton polygon of any iterate of $f$. This yields information on the roots of the iterates of $f$ and also on the periodic points of $u$. The situation slightly differs from the case of \cite{Spe18} but a similar path can be followed modulo some new adjustments and arguments. The second section is devoted to the recovery of the $p$-adic Tate module structure on the set $T_f$ attached to $(f, u)$. In \S\ref{S: dynamical-recovery-GL}, we mimic as in \cite{Spe18} the Galois action on $T_f$ by means of the action of the $\mathbb{Z}_p$-iterates of $u$ in order to attach a character $\chi_f \colon G_L \rightarrow \mathbb{Z}_p^\times$ to $(f, u)$, where $L$ is the finite extension obtained by adjoining to $K$ the fixed points of $u$. In \S\ref{S: Fontaine-period-rings} and \S\ref{S: universal-consistent-ring}, we expose the material used in \S\ref{S: regularity-GL} in order to construct a crystalline period of $\chi_f$. We then prove that it has weight $1$. These properties carry the existence of a formal group $H$ over $\mathcal{O}_L$ whose Tate module is isomorphic to $\mathbb{Z}_p(\chi_f)$. Although $H$ need not be latent to the dynamical system, it should be isomorphic to it. Thus we rely on $H$ as a cornerstone of our constructions. In \S\ref{S: define-Tate-module}, we begin by identifying the counterpart of $(f, u)$ in $H$. This allows us, in a second time, to transport the structure of $\T_p(H)$ to $T_f$ and to turn the latter into a $\mathbb{Z}_p[G_L]$-module for which $f$ is an endomorphism. In the third section, we move backward in explicit integral $p$-adic Hodge theory until we get a formal group.~From \cite{Kis06} we consider the associated Breuil-Kisin module in \S\ref{S: latent-Breuil-Kisin-module}, then extend it into a window as in \cite{Kis09} in \S\ref{S: latent-window}, and transform it into a connected $p$-divisible group as in \cite{Zin01} in \S\ref{S: latent-p-divisible-group}. At every stage, we track the behavior of $f$ on the new object, allowing us to conclude in \S\ref{S: latent-formal-group} that the formal group associated to the connected $p$-divisible group is the latent one.

    {\small\subsection*{Acknowledgments} I am sincerely grateful to my advisor Maja Volkov for her guidance and invaluable advice and to Giovanni Bosco for helpful conversations on $p$-adic Hodge theory. I am also particularly grateful to Laurent Berger for many valuable comments on an earlier version of this paper and to Jonathan Lubin for his kind support.}

    \setcounter{equation}{1}

\section{The dynamical system}

\noindent Let $K/\mathbb{Q}_p$ be a finite extension satisfying $(e_K, p^2-p) = 1$. Throughout this article, we assume $(f, u)$ to be a commuting pair of formal power series in $\mathscr{S}_0(\mathcal{O}_K)$ such that $f \bmod \mathfrak{m}_K \neq 0$, the roots of $f$ and all of its iterates in $\mathfrak{m}_{\mathbb{C}_p\!}$ are simple and $f$ has $p$ of them, where $f'(0)$ is a uniformizer in $\mathbb{Z}_p$ and $u'(0) \in \mathbb{Z}_p^\times$ is nontorsion. Up to replacing $u$ by one of its $\mathbb{Z}$-iterates, we may assume that $u'(0) \in 1 + p\mathbb{Z}_p$. In this section, we review the immediate data at hand in such a dynamical system which will play a crucial role in the subsequent work.

    \subsection{Logarithm}\label{S: logarithm} In his seminal article \cite{Lub94}, Lubin showed that, in the presence of a formal power series $s \in \mathscr{S}_0(\mathcal{O}_K)$ such that $s'(0)$ is nonzero nor a root of unity, there exists a unique invertible power series
    \[
        \Log_s \in \mathscr{S}_0(K)
    \]
    satisfying $\Log_s'(0) = 1$ and $\Log_s(s(X)) = s'(0)\Log_s(X)$, called the logarithm of $s$. Every power series satisfying the same condition as above and commuting with $s$ has the same logarithm. Denote by $\Log_f$ the logarithm of the commuting pair~$(f, u)$; it converges on $\mathfrak{m}_{\mathbb{C}_p\!}$. If $f$, $u$ are endomorphisms of an integral formal group, then $\Log_f$ is the logarithm of this formal group and it is well known that its derivative has coefficients in $\mathcal{O}_K$. It turns out that this is true even for our $p$-adic dynamical system $(f, u)$ by \cite[Lemma 2.2]{Ber19}. The conjugated
    \[
        F(X,Y) = \Log_f^{\circ -1}(\Log_f(X) + \Log_f(Y))
    \]
    is the unique formal group for which $f$ and $u$ are endomorphisms, but it is defined {\em a priori} over $K$ rather than $\mathcal{O}_K$. Therefore solving Lubin's conjecture amounts to show that $F$ is defined over $\mathcal{O}_K$. For every $m \in \mathbb{Z}_p$, we let
    \[
        [m]_f(X) = \Log_f^{\circ -1}(m\Log_f(X)) \in \mathscr{S}_0(K)
    \]
    be the unique series over $K$ commuting with $f$ and acting on the tangent space at $0$ by $m$, which is nothing but the multiplication-by-$m$ endomorphism in $F$.

    \subsection{Torsion points} Whenever we talk about the roots or the fixed points of a power series, we mean its roots or its fixed points, respectively, in the $p$-adic open unit disk $\mathfrak{m}_{\mathbb{C}_p\!}$. A noninvertible power series $s_1 \in \mathscr{S}_0(\mathcal{O}_K)$ such that $s_1'(0)$ is nonzero can have no other fixed points than $0$, but many roots. The set of roots of $s_1$ and its iterates is denoted by
    \[
        \Lambda(s_1) = \{x \in \mathfrak{m}_{\mathbb{C}_p}\! \mid \exists\, n \geqslant 0,\; s_1^{\circ n}(x) = 0\}.
    \]
    On the other hand, a nontorsion invertible power series $s_2 \in \mathscr{S}_0(\mathcal{O}_K)$ can have no other roots than $0$, but many fixed points. If $s_1$ and $s_2$ commute with each other, Lubin showed in \cite[Proposition 3.2]{Lub94} that $\Lambda(s_1)$ is the set of periodic points of $s_2$. Knowing more about these points (their valuation, how many there are for a given valuation) is a key point in our arguments. This can be achieved by means of the Weierstrass preparation theorem and Newton polygons. Working with Newton polygons requires fixing a valuation; for this, we always use $\upsilon_K$ the one normalized on $K$ so that $\upsilon_K(p) = e$.

    \begin{proposition}\label{R: roots-iterates-f}
        For every integer $n \geqslant 1$, the endpoints of the decreasing part of the Newton polygon of $f^{\circ n}\!$ are the $(p^i, e(n-i))$ for each integer $i \in \{0, \dots, n\}$.
    \end{proposition}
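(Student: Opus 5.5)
The plan is to compute the valuations of all roots of $f^{\circ n}$ in $\mathfrak{m}_{\mathbb{C}_p\!}$, together with how many roots have each valuation, and then read the Newton polygon off these data. Two facts fix where the decreasing part begins and ends. First, $f \bmod \mathfrak{m}_K \neq 0$ and $f$ has exactly $p$ roots, so the Weierstrass degree of $f$ is $p$; hence that of $f^{\circ n}$ is $p^n$, and the decreasing part ends at $(p^n, 0)$. Second, $f'(0)$ is a uniformizer of $\mathbb{Z}_p$, so $\upsilon_K((f^{\circ n})'(0)) = ne$, and the polygon starts at $(1, ne)$. Every root is simple, so $0$ contributes exactly the point at abscissa $1$. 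By the usual dictionary, a segment of slope $-\lambda$ and horizontal length $\ell$ corresponds to exactly $\ell$ nonzero roots of valuation $\lambda$.

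\emph{Step 1: the case $n=1$.} I want the decreasing part of the Newton polygon of $f$ to be the single segment from $(1,e)$ to $(p,0)$. Equivalently, the $p-1$ nonzero roots of $f$ all have valuation $e/(p-1)$. I expect this to be the main obstacle. My first attempt would invoke the main theorem of \cite{Li96}: for a noninvertible series commuting with a nontorsion invertible one, it constrains the shape of the Newton polygon, namely it forces the breakpoints to occur at $p$-power abscissae. Since $f$ has Weierstrass degree $p$, there are no intermediate $p$-powers strictly between $1$ and $p$. Hence there is no breakpoint strictly between $1$ and $p$, and the polygon is the single segment. Two supporting facts would help if Li's statement needs to be supplemented. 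One is that $u$ preserves valuations, because $u'(0)$ is a unit and $u$ is integral; by \cite[Proposition 3.2]{Lub94} it therefore permutes the roots of $f$ within each valuation class. The other is that $(e, p-1)=1$, so the slope $e/(p-1)$ is in lowest terms. That makes the field generated by a nonzero root totally ramified of degree divisible by $p-1$ over $K$, which is consistent with a single Galois-stable class of $p-1$ roots.

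\emph{Step 2: induction on $n$.} Suppose every root $y$ of $f^{\circ(n-1)}$ that is not a root of $f^{\circ(n-2)}$ satisfies $\upsilon_K(y) = e/(p^{n-2}(p-1))$; this is the case $n=2$ by Step 1. The roots of $f^{\circ n}$ that are new at level $n$ are the solutions $x$ of $f(x)=y$ for such $y$. By Step 1, the Newton polygon of $f(X)-y$ consists of the point $(0,\upsilon_K(y))$ followed by the polygon of $f$. Since $\upsilon_K(y) \leqslant e/(p-1) < pe/(p-1)$, its lower convex hull is the single segment from $(0,\upsilon_K(y))$ to $(p,0)$. So all $p$ solutions have valuation $\upsilon_K(y)/p = e/(p^{n-1}(p-1))$, and they are distinct because the roots are assumed simple. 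Counting gives exactly $p^{n-1}(p-1)$ roots of exact level $n$.

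\emph{Step 3: assembling the polygon.} For each $i \in \{1,\dots,n\}$ there are $p^{i-1}(p-1)$ roots of valuation $e/(p^{i-1}(p-1))$. These valuations strictly decrease in $i$, so the corresponding segments are arranged with the largest valuation first, as the polygon's convexity requires. The segment of level $i$ has horizontal length $p^{i}-p^{i-1}$ and slope $-e/(p^{i-1}(p-1))$, so its height drops by exactly $e$. Starting from $(1,ne)$, the successive vertices are therefore $(p^i, e(n-i))$ for $i=0,\dots,n$, ending at $(p^n,0)$. This is the claim.
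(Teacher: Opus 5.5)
Your proof is correct and follows essentially the paper's route. The base case comes from \cite[Theorem 4.1]{Li96}, exactly as in the paper, and the induction reads off the valuations of the new roots from the Newton polygon of a translated series. The difference is minor. You solve $f(X)=y$ for the new roots $y$ of the previous level, which needs only the polygon of $f$ and gives $\upsilon_K(x)=\upsilon_K(y)/p$. The paper instead solves $f^{\circ n}(X)=\pi$ for the nonzero roots $\pi$ of $f$, using the inductive polygon of $f^{\circ n}$. The paper also records the irreducibility over $K$ of the new Weierstrass factor, which this statement does not need but later results use.
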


    \begin{proof}
        We proceed by induction on $n$. The base case ($n = 1$) follows directly from the hypotheses on $f$ and \cite[Theorem 4.1]{Li96}. The decreasing part of the Newton polygon of $f$ consists of a single line of slope $-e/(p-1)$, and this implies that the Weierstrass polynomial associated to $f(X)/X$ is irreducible over $K$ given that we assume $(e, p-1) = 1$. Assume now that this holds for $n \geqslant 1$. By multiplicativity of the linear coefficients and the Weierstrass degrees,
        \[
            (1, e(n+1)) \quad\text{and}\quad (p^{n+1}, 0)
        \]
        are vertices of the Newton polygon of $f^{\circ n+1}$. By varying $\pi$ among the $p$ roots of $f$, the equations $f^{\circ n} - \pi = 0$ describe all the roots of $f^{\circ n+1}$. When $\pi = 0$, one then retrieves the roots of $f^{\circ n}$. Otherwise, the Newton polygon of $f^{\circ n} - \pi$ consists of a single line of slope $-e/(p^{n+1}-p^n)$, which is greater than all those of the Newton polygon of $f^{\circ n}$ and therefore contributes to the last segment. Packing the roots of $f^{\circ n+1}/f^{\circ n}$, that is, the roots of $f^{\circ n+1}$ that are not roots of $f^{\circ n}$, one thus finds that there are $p^n(p-1)$ of them and that the Weierstrass polynomial associated to this segment is irreducible over $K$ since we assume $(e, p^2-p) = 1$. The left-hand side part of the Newton polygon of $f^{\circ n+1}$ comes from the induction hypothesis.
    \end{proof}

    \begin{remark}\label{O: irreducible-polynomials}
        This proof yields two observations:
        \begin{enumerate}
            \item Unlike Specter's case, the Weierstrass polynomial of $f^{\circ n+1}/f^{\circ n}$ is not Eisenstein in general but still irreducible over $K$, which is sufficient: therefore the absolute Galois group $G_K$ of $K$ acts transitively on the roots of $f^{\circ n+1}/f^{\circ n}$ since $f$ is defined over $\mathcal{O}_K$.
            \item Asking for the simplicity of the roots of $f$ and its iterates is equivalent to asking that the $p-1$ nonzero roots of $f$ have the same valuation, under the coprimality assumption. This reformulation is easier to check in practice because it only requires to draw a finite part of the Newton polygon of $f$.
        \end{enumerate}
    \end{remark}

    \begin{corollary}\label{R: periodic-points-u}
        For every nonzero $m \in \mathbb{Z}_p$, the fixed points of $u^{\circ m}\!$ are the roots of $f^{\circ v(m)}$ where $v(m) = \upsilon_K(u'(0)^m - 1)/e$.
    \end{corollary}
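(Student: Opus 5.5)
Set $w = u^{\circ m}$. Since $u'(0) \in 1+p\mathbb{Z}_p$, the $\mathbb{Z}_p$-iterates of $u$ make sense in $\mathscr{S}_0(\mathcal{O}_K)$ and commute with $f$. Moreover $w'(0) = u'(0)^m \neq 1$ is nontorsion, and $v(m) = \upsilon_p(u'(0)^m-1)$ is a positive integer. The plan has three steps. First, show that the fixed-point set of $w$ is one of the sets $\Lambda_n(f)$. Second, bound $n$ from above by $v(m)$ using the Newton polygon of $g(X) = w(X) - X$. Third, prove the reverse inclusion $\Lambda_{v(m)}(f) \subseteq \mathrm{Fix}(w)$.

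\emph{Step 1.} By \cite[Proposition 3.2]{Lub94}, every fixed point of $w$ is periodic for $w$, hence lies in $\Lambda(f)$. The set $\mathrm{Fix}(w)$ is finite by Weierstrass preparation applied to $g$, whose linear coefficient $u'(0)^m-1$ is nonzero. It is stable under $f$, since $w(f(x)) = f(w(x)) = f(x)$, and stable under $G_K$, since $w$ has coefficients in $\mathcal{O}_K$. Suppose $x \in \mathrm{Fix}(w)$ is a root of $f^{\circ j}$ but not of $f^{\circ j-1}$. By Remark~\ref{O: irreducible-polynomials}(1), $G_K$ acts transitively on the roots of $f^{\circ j}/f^{\circ j-1}$, so all of them are fixed by $w$. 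Applying $f$ repeatedly then shows that all roots of $f^{\circ i}/f^{\circ i-1}$ are fixed for every $i \leqslant j$. Hence $\mathrm{Fix}(w) = \Lambda_n(f)$, where $n$ is the largest such $j$.

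\emph{Step 2.} Factor $g(X)/X$ as a unit times its Weierstrass polynomial $P$. The roots of $P$ are exactly the points of $\Lambda_n(f) \smallsetminus \{0\}$, each with some multiplicity $\geqslant 1$, so the constant term of $P$ has valuation at least $\sum_{\lambda \in \Lambda_n(f)\smallsetminus\{0\}} \upsilon_K(\lambda)$. By Proposition~\ref{R: roots-iterates-f}, the decreasing part of the Newton polygon of $f^{\circ n}$ runs from $(1, en)$ to $(p^n, 0)$, so this sum equals $en$. On the other hand, the constant term of $P$ equals $g'(0) = u'(0)^m - 1$ up to a unit, so its valuation is $e\,v(m)$. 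This gives $n \leqslant v(m)$, with equality exactly when all fixed points of $w$ are simple.

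\emph{Step 3, the main obstacle.} It remains to show $\Lambda_{v(m)}(f) \subseteq \mathrm{Fix}(w)$. My first attempt uses the logarithm of \S\ref{S: logarithm}. From $\Log_f \circ f = f'(0)\Log_f$ we get that $\Log_f$ vanishes on all of $\Lambda(f)$. I would then aim to show that the zeros of $\Log_f$ on $\mathfrak{m}_{\mathbb{C}_p}$ are simple and equal to $\Lambda(f)$, using that $\Log_f'$ is integral \cite[Lemma 2.2]{Ber19}. Next, write $u'(0)^m = 1 + p^{v(m)}c$ with $c \in \mathbb{Z}_p^\times$. The idea is to compare $w$ with the series $X \mapsto F\bigl(X, [p^{v(m)}c]_f(X)\bigr)$ near each $x \in \Lambda_{v(m)}(f)$, where $[p^{v(m)}c]_f = [c]_f \circ f^{\circ v(m)}$ up to the unit $f'(0)^{v(m)}/p^{v(m)}$ and kills $\Lambda_{v(m)}(f)$. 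The delicate point is that $F$ and $[\cdot]_f$ are only known over $K$, so convergence at $x$ must be justified. If this comparison is hard to make rigorous, a fallback is a direct counting argument. One would treat $m = p^{k}m'$ with $p \nmid m'$, using that $u^{\circ p}$ and $u$ have nested fixed-point sets. One would then show that each step in $\upsilon_p(m)$ enlarges $\mathrm{Fix}(w)$ by exactly one layer $\Lambda_{i+1}(f) \smallsetminus \Lambda_i(f)$. The degree count from Step 2, applied to $u^{\circ p^{k}}$ in place of $w$, should force this.
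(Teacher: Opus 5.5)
Your Steps 1 and 2 are correct. They are essentially the paper's argument, written more carefully. The paper also reads the Newton polygon of $u^{\circ m}(X)-X$ from $(1, e\,v(m))$ down to the $X$-axis and lets each $G_K$-orbit in $\Lambda(f)$ account for a drop of $e$. You rightly observe that, without control of multiplicities, this only gives $\mathrm{Fix}(w)=\Lambda_n(f)$ with $n\leqslant v(m)$.

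\textbf{The gap.} The proposal leaves the statement unproved: Step 3 is only sketched, and neither route works as stated.
\begin{enumerate}
\item The comparison with $F\bigl(X,[p^{v(m)}c]_f(X)\bigr)$ requires evaluating series known only over $K$ at torsion points. That is essentially the integrality one is trying to establish.
\item The fallback cannot succeed either. Step 2 supplies only upper bounds on $n$, so no degree count can force a new layer of fixed points to appear. Moreover, an induction on $\upsilon_p(m)$ still needs the base case $p\nmid m$, i.e.\ $\mathrm{Fix}(u^{\circ m})=\Lambda_\ell(f)$, which is the original problem.
\end{enumerate}

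\textbf{The missing ingredient: simplicity of fixed points.} Once every fixed point of $w$ is known to be simple, your own equality clause in Step 2 gives $n=v(m)$, and Step 3 becomes unnecessary. The argument is short.
\begin{enumerate}
\item Differentiate $f\circ w=w\circ f$ and evaluate at $x\in\mathrm{Fix}(w)$; all series are integral, so this is legitimate. You get $f'(x)\,w'(x)=w'(f(x))\,f'(x)$.
\item For $x\in\Lambda(f)$ one has $f'(x)\neq 0$. Indeed, $x$ is a simple root of some $f^{\circ N}$, and $(f^{\circ N})'(x)=\prod_{i=0}^{N-1}f'(f^{\circ i}(x))$.
\item Since $f(x)$ is again fixed by $w$, induction along $x, f(x), f^{\circ 2}(x),\dots$ gives $w'(x)=w'(0)=u'(0)^m\neq 1$. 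Hence $x$ is a simple root of $w(X)-X$.
\end{enumerate}
Your $\Log_f$ idea was close. Simplicity of the zeros of $\Log_f$ on $\Lambda(f)$ (which follows from the same $f'(x)\neq 0$ induction), combined with the derivative of $\Log_f\circ w=w'(0)\Log_f$ at a fixed point, yields the same conclusion $w'(x)=w'(0)$. The paper's proof uses this simplicity tacitly when it lets each orbit contribute exactly $e$.

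\textbf{A second, smaller gap in Step 2.} The equality clause also needs $w(X)\not\equiv X \bmod \mathfrak{m}_K$. Otherwise $g(X)/X$ does not factor as a unit times a Weierstrass polynomial. One must first divide out a nonunit content $c\in\mathfrak{m}_K$, and the resulting constant term has valuation $e\,v(m)-\upsilon_K(c)<e\,v(m)$. A nonzero linear coefficient gives only finiteness of $\mathrm{Fix}(w)$, not this. The paper takes it from \cite[Corollary 4.3.1]{Lub94}; you need to cite or prove it as well.
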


    \begin{proof}
        The series $u$ commutes with the noninvertible $f$ having finite Weierstrass degree, ensuring that its nontrivial iterates have finitely many fixed points by \cite[Corollary 4.3.1]{Lub94}. The decreasing part of the Newton polygon of $u^{\circ m}(X) - X$ evolves between the vertex $(1, \upsilon_K(u'(0)^m - 1))$ and the $X$-axis. Since $u$ is defined over $\mathcal{O}_K$, its fixed points are a finite union of $G_K$-orbits in $\Lambda(f)$. According to Remark \ref{O: irreducible-polynomials}, each segment in the latter polygon then makes a decrease of $e$ along the $Y$-axis.
    \end{proof}

\section{The latent Tate module}

\noindent As stated in the introduction, the $p$-adic Tate module of a formal group characterizes it up to isomorphism. Specter provided a method in \cite{Spe18} to recover the associated representation. In this section, we adapt this method to our case and go further by retrieving the full structure on the set of $f$-consistent sequences.

    \subsection{The Galois action on \texorpdfstring{$f$}{f}-consistent sequences}\label{S: dynamical-recovery-GL} The full structure on $T_f$ can be recovered solely from how Galois acts on it. To this end, we are interested in finding a more explicit description of this action. This can be achieved by comparing it with the action of the $\mathbb{Z}_p$-iterates of $u$ on $T_f$. We recall that $T_f$ is the $G_K$-set of $f$-consistent sequences of elements in $\mathfrak{m}_{\mathbb{C}_p\!}$, and that the pair of series $(f, u)$ is a pair of endomorphisms of it, acting by componentwise evaluation:
    \[
        T_f = \big\{(x_n)_{n \geqslant 0} \in \mathfrak{m}_{\mathbb{C}_p}^\mathbb{N}\! \;\big|\; x_0 = 0 \text{ and } \forall\, n \geqslant 0,\; f(x_{n+1}) = x_n \big\}.
    \]
    Let $S_f \subset T_f$ be the subset composed of all elements whose second entry is nonzero. If Lubin's conjecture holds, $T_f$ is canonically isomorphic to the $p$-adic Tate module of the latent formal group, and an element $x \in T_f$ is a generator of $T_f$ as $\mathbb{Z}_p$-module if and only if $x \in S_f$. Elements of $T_f$ are either zero or have finitely many initial zero entries followed by an element of $S_f$. In order to describe the $G_K$-action on $T_f$, it is therefore sufficient to describe it on the subset $S_f$. Nevertheless, recovering this action dynamically via the $\mathbb{Z}_p$-iterates of $u$ requires restricting it. Indeed, $u$, and hence all its iterates, fix some initial entries of all the elements of $T_f$. We let
    \[
        \ell = \upsilon_K(u'(0) - 1)/e \geqslant 1
    \]
    which is a positive integer since we have picked $u'(0) \in 1 + p\mathbb{Z}_p$. By Corollary \ref{R: periodic-points-u}, $u$ fixes up to the $\ell+1$ first components of the elements of $S_f$. We interpret $S_f$ as a directed tree, as shown in Figure \ref{F: tree}. Indexed from bottom to top, its vertices at level $n \geqslant 1$ are the roots of $f^{\circ n}/f^{\circ n-1}$, and there is an edge from a vertex $v_2$ to a vertex $v_1$ if and only if $f(v_2) = v_1$. Given $\pi = (\pi_n)_{n \geqslant 0} \in S_f$, we let
    \[
        S_{f, \pi} = \{(x_n)_{n \geqslant 0} \in T_f \mid x_0 = \pi_0,\dots, x_\ell = \pi_\ell\}
    \]
    which is a subset of $S_f$. Set $j = p^\ell - p^{\ell-1}$; any sequence $(\alpha_1,\dots,\alpha_j)$ of elements $\alpha_i = (\alpha_{i,n})_{n \geqslant 0}$ of $S_f$ such that $\{\alpha_{1,\ell},\dots,\alpha_{j,\ell}\}$ is the set of roots of $f^{\circ \ell}/f^{\circ \ell-1}$ then determines a partition $(S_{f, \alpha_1},\dots, S_{f, \alpha_j})$ of $S_f$. In what follows, we first retrieve the Galois action on such subsets and then discuss of its global action on $S_f$. The next result allows us to pass from one element of $S_f$ to another.

    \begin{figure}
        \begin{center}
            \begin{tikzcd}[column sep=0mm, row sep=5mm]
                & \;\vdots\ar[dr] & \vdots\ar[d] & \vdots\;\ar[dl] & & \;\vdots\ar[dr] & \vdots\ar[d] & \vdots\;\ar[dl] & & \;\vdots\ar[dr] & \vdots\ar[d] & \vdots\;\ar[dl] & & & & \;\vdots\ar[dr] & \vdots\ar[d] & \vdots\;\ar[dl] & & \;\vdots\ar[dr] & \vdots\ar[d] & \vdots\;\ar[dl] & & \;\vdots\ar[dr] & \vdots\ar[d] & \vdots\;\ar[dl]\\
                \ell \ar[rr, dash, densely dotted] & & \ast \ar[drrrr]\ar[rrrr, dash, densely dotted] & & & & \ast \ar[d]\ar[rrrr, dash, densely dotted] & & & & \ast \ar[dllll]\ar[rrrrrr, dash, densely dotted] & & & & & & \ast \ar[drrrr]\ar[rrrr, dash, densely dotted] & & & & \ast \ar[d]\ar[rrrr, dash, densely dotted] & & & & \ast \ar[dllll]\ar[rr, dash, densely dotted] & & \phantom{\ell}\\
                & & & & & & \ast \ar[drrrrrrr] & & & & & & & & & & & & & & \ast \ar[dlllllll]\\
                & & & & & & & & & & & & & 0
            \end{tikzcd}
        \end{center}
        \label{F: tree}
        \caption{Directed tree associated to $S_f$ in the case $p = 3$ and $\ell = 2$.}
    \end{figure}

    \begin{proposition}\label{R: transitive-action-GK-Sf}
        The Galois group $G_K\!$ acts transitively on the set $S_f$.
    \end{proposition}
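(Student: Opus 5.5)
The plan is to reduce transitivity on the infinite set $S_f$ to transitivity on each finite level of the tree, and then pass to the limit by compactness. For $n \geqslant 1$, let $V_n$ be the set of roots of $f^{\circ n}/f^{\circ n-1}$, i.e.\ the vertices at level $n$. Any $x \in S_f$ satisfies $x_n \in V_n$ for all $n \geqslant 1$: the entry $x_n$ is a root of $f^{\circ n}$ but not of $f^{\circ n-1}$, because $f^{\circ n-1}(x_n) = x_1 \neq 0$.

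\emph{Finite levels.} Remark \ref{O: irreducible-polynomials}(1) (equivalently, the proof of Proposition \ref{R: roots-iterates-f}) shows that the Weierstrass polynomial whose roots are $V_n$ is irreducible over $K$. Hence $G_K$ acts transitively on $V_n$ for every $n \geqslant 1$. Now take $x, y \in S_f$. For each $n$, pick $\sigma_n \in G_K$ with $\sigma_n(x_n) = y_n$. Since $f$ has coefficients in $\mathcal{O}_K$, we have $\sigma_n(x_m) = \sigma_n(f^{\circ n-m}(x_n)) = f^{\circ n-m}(y_n) = y_m$ for all $m \leqslant n$. So the set
\[
    C_n = \{\sigma \in G_K \mid \sigma(x_m) = y_m \text{ for all } m \leqslant n\}
\]
is nonempty.

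\emph{Passing to the limit.} Each $C_n$ is a coset of the open subgroup $\Gal(\overline K/K(x_1,\dots,x_n))$, hence closed in the profinite group $G_K$. The sets $C_n$ are decreasing and nonempty, so compactness of $G_K$ gives some $\sigma \in \bigcap_n C_n$. Since the Galois action on $\mathfrak{m}_{\mathbb{C}_p}$ is componentwise on sequences, this $\sigma$ satisfies $\sigma(x) = y$.

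The only substantive input is the irreducibility at each level, which rests on the coprimality hypothesis $(e, p^2-p)=1$ via Proposition \ref{R: roots-iterates-f}; this is already established, so no step presents a real obstacle. One technical point needs a check: the action of $G_K$ on $\mathfrak{m}_{\mathbb{C}_p}$ (through continuity on the completion) agrees with the algebraic action on these algebraic points. It does, because every $x_n$ is algebraic over $K$.
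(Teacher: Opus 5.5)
Your proposal is correct and follows the paper's proof: transitivity at each finite level comes from the irreducibility in Remark~\ref{O: irreducible-polynomials}, and the limit is handled by intersecting a nested family of nonempty closed subsets of the compact group $G_K$. Your $C_n$ coincides with the paper's $A_n$, since $\sigma(x_n)=y_n$ already forces $\sigma(x_m)=y_m$ for $m\leqslant n$. You also spell out the nestedness and nonemptiness that the paper leaves implicit.
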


    \begin{proof}
        For each $n \geqslant 1$, the $n$-th component of an element of $S_f$ is a root of $f^{\circ n}$ that is not a root of $f^{\circ n-1}$. These roots are the roots of an irreducible polynomial over $K$, as stated in Remark \ref{O: irreducible-polynomials}. Therefore $G_K$ acts transitively on this set. To prove that the transitivity passes to the limit, pick $(x_n)_{n \geqslant 0}$, $(y_n)_{n \geqslant 0} \in S_f$ and define for all $n \geqslant 0$ the set
        \[
            A_n = \{\sigma \in G_K \mid \sigma.x_n = y_n\}
        \]
        which is closed. Since $G_K$ is compact, the nested sequence $(A_n)_{n \geqslant 0}$ has a nontrivial intersection. This proves the proposition.
    \end{proof}

        \subsubsection{Action on cells} Let $\pi = (\pi_n)_{n \geqslant 0}$ be an element of $S_f$ and set $L_\pi = K(\pi_\ell)$. Then the group $G_{L_\pi}\!$ fixes the $\ell+1$ first components of $\pi$. We begin by recovering the action of $G_{L_\pi}\!$ on the cell $S_{f, \pi} \subset S_f$.

        \begin{proposition}\label{R: simple-transitive-action-u-Sfpi}
            The group $u^{\circ \mathbb{Z}_p}\!$ acts simply transitively on the set $S_{f, \pi}$.
        \end{proposition}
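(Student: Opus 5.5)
We first have to make sense of $u^{\circ m}$ for $m \in \mathbb{Z}_p$. Since $u'(0) \in 1+p\mathbb{Z}_p$, we have $u(X) \equiv X \pmod{(p, X^2)}$ up to the linear term, and more precisely $u^{\circ p^k} \to X$ $(p,X)$-adically as $k \to \infty$. Hence $m \mapsto u^{\circ m}$ extends by continuity from $\mathbb{Z}$ to $\mathbb{Z}_p$. A quick check is that $u^{\circ m} = \Log_f^{\circ -1}(\cdot)$-conjugates of the map $X\mapsto u'(0)^mX$ in the logarithm coordinate, which converges for $m\in\mathbb{Z}_p$. These iterates still lie in $\mathscr{S}_0(\mathcal{O}_K)$, still commute with $f$, and so act componentwise on $T_f$. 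Corollary \ref{R: periodic-points-u} applies: for $m \neq 0$ the fixed points of $u^{\circ m}$ are exactly $\Lambda_{v(m)}(f)$, where $v(m) = \ell + \upsilon_p(m)$ because $u'(0)^m - 1$ has $p$-adic valuation $\ell + \upsilon_p(m)$.

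\emph{Stability of the cell.} Each $u^{\circ m}$ fixes the roots of $f^{\circ \ell}$, since $v(m) \geqslant \ell$. It therefore fixes $\pi_0, \dots, \pi_\ell$, so $u^{\circ m}$ maps $S_{f,\pi}$ into itself, and $u^{\circ \mathbb{Z}_p}$ acts on $S_{f,\pi}$.

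\emph{Freeness.} Suppose $u^{\circ m}(x) = x$ for some $x \in S_{f,\pi}$ with $m \neq 0$. Then every component $x_n$ is a fixed point of $u^{\circ m}$, hence lies in $\Lambda_{v(m)}(f)$. But for $n > v(m)$ the component $x_n$ is a root of $f^{\circ n}/f^{\circ n-1}$, because $x \in S_f$, and so it is not a root of $f^{\circ v(m)}$. This is a contradiction, so the action is free.

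\emph{Transitivity.} This is the main step, and I plan a counting argument level by level. For $n \geqslant \ell$, let $C_n$ be the set of roots $y$ of $f^{\circ n}/f^{\circ n-1}$ with $f^{\circ (n-\ell)}(y) = \pi_\ell$. Since each nonzero root of $f$ has exactly $p$ preimages under $f$ in $\mathfrak{m}_{\mathbb{C}_p}$ (by the Newton polygon of $f - \pi$), we get $\# C_n = p^{n-\ell}$. The group $u^{\circ \mathbb{Z}_p}$ acts on $C_n$ through the finite quotient $\mathbb{Z}_p/p^{n-\ell}\mathbb{Z}_p$. Indeed, $u^{\circ m}$ fixes $C_n$ pointwise when $v(m) \geqslant n$, that is, when $\upsilon_p(m) \geqslant n-\ell$. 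Conversely, if $\upsilon_p(m) < n-\ell$, then $u^{\circ m}$ fixes no point of $C_n$, by the same argument as for freeness. So $\mathbb{Z}_p/p^{n-\ell}$ acts freely on a set of size $p^{n-\ell}$, and this action is transitive.

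\emph{Passing to the limit.} Take $x, y \in S_{f,\pi}$. The sets $M_n = \{m \in \mathbb{Z}_p : u^{\circ m}(x_n) = y_n\}$ are nonempty by the previous step and closed by continuity. They are also nested, since $f$ commutes with $u^{\circ m}$. By compactness of $\mathbb{Z}_p$, as in Proposition \ref{R: transitive-action-GK-Sf}, their intersection is nonempty, and any $m$ in it satisfies $u^{\circ m}(x) = y$.

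The delicate point is the well-definedness and continuity of $\mathbb{Z}_p$-iterates together with the exact identity $v(m) = \ell + \upsilon_p(m)$. I will derive both from $u'(0) \in 1+p\mathbb{Z}_p$ and the fact that $\upsilon_K = e\,\upsilon_p$ on $\mathbb{Z}_p$.
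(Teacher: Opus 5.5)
Your route is the paper's own. The cell is stable; at each level $n>\ell$ you compare $\#O_n(\pi_\ell)=p^{n-\ell}$ with a stabilizer $p^{n-\ell}\mathbb{Z}_p$ and conclude transitivity by orbit--stabilizer; compactness of $\mathbb{Z}_p$ passes this to the limit; and freeness comes from the finiteness of the fixed-point sets of nontrivial iterates. For freeness you use Corollary~\ref{R: periodic-points-u} directly where the paper cites Lubin.

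\textbf{The gap.} You propose to derive $\upsilon_p(u'(0)^m-1)=\ell+\upsilon_p(m)$ from $u'(0)\in 1+p\mathbb{Z}_p$, as the paper's stabilizer computation also does. This identity is false for $p=2$, a case the hypothesis $(e_K,p^2-p)=1$ allows, since it only requires $e_K$ odd. If $u'(0)\equiv 3 \pmod 4$, then $\ell=1$, yet $\upsilon_2(u'(0)^m-1)\geqslant 2+\upsilon_2(m)$ for every even $m$. Hence for $n\geqslant 3$ the stabilizer of a point of $C_n$ contains $2^{n-2}\mathbb{Z}_2$, and its orbit has at most $2^{n-2}<2^{n-1}=\#C_n$ elements.

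The statement itself then fails. Take $K=\mathbb{Q}_2$, $f(X)=(1+X)^2-1$ and $u(X)=(1+X)^3-1$. Then $S_{f,\pi}=S_f$ is a torsor under $\mathbb{Z}_2^\times$, namely the compatible systems of primitive $2^n$-th roots of unity. But $u^{\circ\mathbb{Z}_2}$ acts only through the index-two subgroup $\overline{3^{\mathbb{Z}}}=\{x\equiv 1,3 \bmod 8\}$, so there are two orbits.

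\textbf{The repair.} When $p=2$, normalize $u'(0)\in 1+4\mathbb{Z}_2$ by replacing $u$ with $u^{\circ 2}$. This is harmless for the main theorem, since $u$ is recovered at the end from the fact that it commutes with $f$. Then $\upsilon_p(u'(0)^m-1)=\upsilon_p(u'(0)-1)+\upsilon_p(m)$ holds for every $p$, and your argument goes through unchanged.

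\textbf{A minor point.} To get $\#C_n=p^{n-\ell}$ you need every $z\in\Lambda(f)$, not only every root of $f$, to have exactly $p$ preimages. This follows because $f-z$ has Weierstrass degree $p$ and the roots of the iterates are simple.
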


        \begin{proof}
            This is the same as \cite[Proposition 2.9]{Spe18} adapting the notations. For the reader's convenience and due to the stature of this result, we outline the proof. To show that the action is transitive, it suffices to show, by compacity of $\mathbb{Z}_p$, that it is the case on
            \[
                O_{n}(\pi_\ell) = \{x \in \mathfrak{m}_{\mathbb{C}_p}\! \mid f^{\circ n-\ell}(x) = \pi_\ell\} \subset \Lambda(f)
            \]
            for every integer $n > \ell$. Using the Newton polygon, one finds that the cardinality of $O_n(\pi_\ell)$ is $p^{n-\ell}$. Joint to Corollary \ref{R: periodic-points-u}, the stabilizer of an element $x \in O_n(\pi_\ell)$ under the action of the $\mathbb{Z}_p$-iterates of $u$ is
            \begin{align*}
                \Stab(x) & = \big\{u^{\circ m} \in u^{\circ \mathbb{Z}_p} \;\big|\; \upsilon_K(u'(0)^m - 1) \geqslant ne\big\}\\
                & = \big\{u^{\circ m} \in u^{\circ \mathbb{Z}_p} \;\big|\; \upsilon_K(u'(0) - 1) + \upsilon_K(m) \geqslant ne\big\}\\
                & = u^{\circ p^{n-\ell} \mathbb{Z}_p}
            \end{align*}
            since we have picked $u'(0) \in 1 + p\mathbb{Z}_p$. By the orbit-stabilizer theorem, we deduce that $u^{\circ \mathbb{Z}_p}$ acts transitively on $O_n(\pi_\ell)$. Therefore $u^{\circ \mathbb{Z}_p}$ acts transitively on $S_{f, \pi}$, and this action is free because of \cite[Corollary 4.3.1]{Lub94}.
        \end{proof}

        The transitive action of $G_K$ on the set $S_f$ restricts to a transitive action of $G_{L_\pi\!}$ on the subset $S_{f, \pi}$. Combining this observation with Proposition \ref{R: simple-transitive-action-u-Sfpi} enables us to recover the latter action via the following rule: as $g$ varies in $G_{L_\pi}$, the elements $g.\pi$ exhaust the set $S_{f, \pi}$, and comparing $g.\pi$ with $\pi$ gives a unique $\mathbb{Z}_p$-iterate $u_g$ of $u$ such that
        \[
            g.\pi = u_g(\pi).
        \]
        The resulting map sending $g$ to $u_g'(0)$ defines a character.

        \begin{proposition}
            The map $\chi_{f, \pi} \colon G_{L_\pi\!} \rightarrow \mathbb{Z}_p^\times$ defined by $g \mapsto u_g'(0)$ is a character of $G_{L_\pi}\!$ satisfying $g.\pi = [\chi_{f, \pi}(g)]_f(\pi)$ for every $g \in G_{L_\pi\!}$.
        \end{proposition}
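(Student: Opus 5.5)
Three things have to be checked: that $u_g$ is well defined, that $g \mapsto u_g'(0)$ is multiplicative, and that $u_g = [u_g'(0)]_f$.

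\emph{Well-definedness.} For $g \in G_{L_\pi}$, the element $g.\pi$ lies in $S_{f,\pi}$. Indeed, $g$ fixes $\pi_0,\dots,\pi_\ell$, since these lie in $L_\pi$ (they are $f$-iterates of $\pi_\ell$ and $f$ has coefficients in $\mathcal{O}_K$). Moreover, the Galois action on $T_f$ is componentwise and commutes with $f$, so $g.\pi \in T_f$. Proposition~\ref{R: simple-transitive-action-u-Sfpi} then gives a unique $m_g \in \mathbb{Z}_p$ with $g.\pi = u^{\circ m_g}(\pi)$. Set $u_g = u^{\circ m_g}$. Here $u^{\circ m}$ for $m \in \mathbb{Z}_p$ means the $p$-adic limit of integer iterates. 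This limit exists and has coefficients in $\mathcal{O}_K$, because $u'(0) \in 1+p\mathbb{Z}_p$, which is the same fact already used in Proposition~\ref{R: simple-transitive-action-u-Sfpi}. Its linear coefficient is $u'(0)^{m}$, so $\chi_{f,\pi}(g) = u'(0)^{m_g}$. This lies in $\mathbb{Z}_p^\times$ and determines $m_g$, since $m \mapsto u'(0)^m$ is injective for nontorsion $u'(0)$.

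\emph{Identification with $[\,\cdot\,]_f$.} Every $u^{\circ m}$ commutes with $f$: integer iterates do, and commutation passes to the $p$-adic limit coefficientwise. By uniqueness in \S\ref{S: logarithm}, $\Log_f(u^{\circ m}(X)) = u'(0)^m \Log_f(X)$, and hence $u^{\circ m} = [u'(0)^m]_f$. Therefore $g.\pi = [\chi_{f,\pi}(g)]_f(\pi)$.

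\emph{Multiplicativity.} This is the key point. For $g,h \in G_{L_\pi}$ we compute
\[
(gh).\pi = g.(u_h(\pi)) = u_h(g.\pi) = u_h(u_g(\pi)) = u^{\circ (m_h+m_g)}(\pi).
\]
The second equality holds because $u_h$ has coefficients in $\mathcal{O}_K \subset L_\pi$, so Galois commutes with evaluating $u_h$ on each component; convergence on $\mathfrak{m}_{\mathbb{C}_p}$ and continuity of $g$ make this legitimate. Freeness in Proposition~\ref{R: simple-transitive-action-u-Sfpi} then gives $m_{gh} = m_g + m_h$, so $\chi_{f,\pi}(gh) = \chi_{f,\pi}(g)\chi_{f,\pi}(h)$.

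\emph{Continuity.} If $g$ fixes $\pi_n$, then $u_g$ fixes the nonzero root $\pi_n$ of $f^{\circ n}/f^{\circ n-1}$. By Corollary~\ref{R: periodic-points-u}, this forces $\upsilon_K(u'(0)^{m_g}-1) \geqslant ne$, i.e.\ $\chi_{f,\pi}(g) \equiv 1 \bmod p^n$. Thus the preimage of $1+p^n\mathbb{Z}_p$ contains the open subgroup $G_{L_\pi(\pi_n)}$, and $\chi_{f,\pi}$ is continuous.

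The main obstacle is the commutation of $G_{L_\pi}$ with the $\mathbb{Z}_p$-iterates $u^{\circ m}$ acting on $T_f$, which requires $u^{\circ m}$ to be integral. I would prove integrality first, by writing $u^{\circ m}$ as the limit of $u^{\circ m_k}$ with $m_k \in \mathbb{Z}$ converging $p$-adically to $m$, as in \cite{Lub94}.
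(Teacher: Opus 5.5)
Your proof is correct and follows the paper's route. The paper deduces multiplicativity from the freeness in Proposition~\ref{R: simple-transitive-action-u-Sfpi} and calls the rest immediate. You additionally spell out what it leaves implicit: that $g$ commutes with the integral series $u_h$, that $u^{\circ m} = [u'(0)^m]_f$ by uniqueness of the logarithm, and that $\chi_{f,\pi}$ is continuous, via Corollary~\ref{R: periodic-points-u}.
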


        \begin{proof}
            The fact that $\chi_{f, \pi}$ is a group homomorphism is a consequence of the freeness of the action in Proposition \ref{R: simple-transitive-action-u-Sfpi}. The rest of the statement is immediate.
        \end{proof}

        \subsubsection{Action on the whole set} Having analyzed the situation on cells, we turn to the global case. To compare the local actions with each other, we consider the finite extension $L$ of $K$ generated by the roots of $f^{\circ \ell}/f^{\circ \ell-1}$ or, equivalently, the Galois closure of $L_\pi/K$ for any $\pi \in S_f$. Given $\pi, \pi' \in S_f$, Proposition \ref{R: transitive-action-GK-Sf} guarantees the existence of a $\sigma \in G_K$ satisfying $\pi' = \sigma.\pi$. Therefore $L_{\pi'} = \sigma(L_\pi)$ and the absolute Galois groups of $L_\pi$ and $L_{\pi'}$ are $\sigma$-conjugate in $G_K$. Since $u$ is defined over $\mathcal{O}_K$, the previous rule gives, for every $g \in G_L$, that
        \begin{equation}\label{fact-2}
            \chi_{f, \pi}(g) = \chi_{f, \pi'}(\sigma g \sigma^{-1}).
        \end{equation}
        Now if Lubin's conjecture holds, we should also obtain equality of the characters, since there would exist a unique character giving the action of $G_K$ at every point of $T_f$. This lack of precision in our situation reflects the price we pay for restricting the Galois action from $K$ to $L$. Nevertheless, we fix once and for all a $\pi = (\pi_n)_{n \geqslant 0} \in S_f$ and let
        \[
            \chi_f \colon G_L \longrightarrow \mathbb{Z}_p^\times \colon g \longmapsto \chi_{f, \pi}(g).
        \]
        After the two preparatory subsections that follow, we shall prove that this character is crystalline of weight $1$. Consequently there exists a formal group defined over $\mathcal{O}_L$ whose associated character is $\chi_f$, and we transport the structure of its $p$-adic Tate module to $T_f$. Theorem \ref{R: main-theorem} will ultimately imply that the subsequent constructions were independent of the choice of $\pi$.

    \subsection{Fontaine's period rings}\label{S: Fontaine-period-rings} Fontaine has classified the $p$-adic representations of the absolute Galois group $G_E$ of a finite extension $E/\mathbb{Q}_p$. His classifier relies on period rings such as $\BdR$ and $\Bcris$. In this subsection, we outline the construction of these two rings and review the formal group setting. The original constructions can be found in \cite{Fon94}, and in \cite{Col02} for the tensorial generalization. To coincide with our situation, we work directly in the particular case of $E = L$.

        \subsubsection{The integers of the tilt of \texorpdfstring{$\mathbb{C}_p$}{Cp}} Let $\mathcal{O}_{\mathbb{C}_p}\!$ be the closed unit disk of $\mathbb{C}_p$. Throughout the remainder of this article, we fix $\pi_L$ a uniformizer of $L$ and we let $q$ be the cardinality of the residue field $\mathcal{O}_L/\mathfrak{m}_L = \mathbb{F}_q$. The definition of the aforementioned period rings involves several mechanisms, whose starting point is the perfect ring
        \[
            \mathcal{R} = \varprojlim_{x \mapsto x^q} \mathcal{O}_{\mathbb{C}_p}/(\pi_L)
        \]
        of characteristic $p$. This ring is endowed with a valuation $\upsilon_\mathcal{R}$ for which it is complete, defined by $\upsilon_\mathcal{R}(x) = \lim_{n \to +\infty} q^n\upsilon_p(\widehat{x}_n)$ for all $x = (x_n)_{n \geqslant 0} \in \mathcal{R}$, where $\widehat{x}_n \in \mathcal{O}_{\mathbb{C}_p\!}$ is a lift of $x_n$ modulo $(\pi_L)$ for all integers $n \geqslant 0$ and where $\upsilon_p$ is the $p$-adic valuation normalized on $\mathbb{Q}_p$. Moreover, this ring is equipped with a compatible action of $G_L$ inherited from the action of the latter on $\mathcal{O}_{\mathbb{C}_p\!}$.

        \subsubsection{Blank period rings} We move to characteristic zero. Let $\W_{\!\pi_L\!}$ be the generalized $\pi_L$-adic Witt vectors endofunctor of $\mathcal{O}_L$-algebras; see for instance \cite[\S2]{CD15} for a short presentation in our context. The ring $\mathcal{R}$ is naturally an $\mathcal{O}_L$-algebra and we let
        \[
            \AinfL = \W_{\!\pi_L\!}(\mathcal{R})
        \]
        be the unique strict $\pi_L$-ring of residue ring $\mathcal{R}$. Alternatively, one can see this blank period ring as $\mathcal{O}_L \otimes_{\W(\mathbb{F}_q)} \W(\mathcal{R})$, where $\W$ is the usual $p$-adic Witt vectors functor. Its elements can be uniquely written in the form $\sum_{n \geqslant 0} [x_n]\pi_L^n$, where $x_n \in \mathcal{R}$ and $[-]$ is the Teichmüller lift. The action of $G_L$ on $\mathcal{R}$ lifts by functoriality to an action on $\AinfL$. An important map attached to this ring is the $G_L$-equivariant epimorphism $\theta \colon \AinfL \twoheadrightarrow \mathcal{O}_{\mathbb{C}_p}\!$ of $\mathcal{O}_L$-algebras determined by mapping $[x]$ to $\lim_{n \to +\infty} \widehat{x}_n^{q^n}\!$, where the element $\widehat{x}_n \in \mathcal{O}_{\mathbb{C}_p}\!$ lifts the $n$-th component of $x \in \mathcal{R}$. This map uniquely extends by $L$-linearity to a $G_L$-equivariant epimorphism $\theta$ from $\BinfL^+ = \AinfL[1/\pi_L]$ to $\mathbb{C}_p$ and its kernel is principal.

        \subsubsection{The de Rham period field} The de Rham period ring
        \[
            \BdR^+ = \varprojlim_{n \geqslant 1} \BinfL^+/\ker(\theta)^n
        \]
        is the $\ker(\theta)$-adic completion of $\BinfL^+$. It inherits the $\pi_L$-adic topology from $\BinfL^+$ as well as the $\ker(\theta)$-adic one, along with an action of $G_L$ which is compatible with these topologies. Again, $\theta$ extends to a $G_L$-equivariant epimorphism $\theta \colon \BdR^+ \twoheadrightarrow \mathbb{C}_p$ and its kernel remains principal. The $\mathbb{Z}$-powers of the ideal $\ker(\theta)$ induce a decreasing, exhaustive and separated filtration $(\Fil^n \BdR)_{n \in \mathbb{Z}}$ on the fraction field $\BdR$ of $\BdR^+$. This field is our first ring of interest.

        The field $\BdR$ is sufficient to capture $p$-adic representations arising from formal groups, but too coarse to be used to classify them. Let $\REP_{\mathbb{Q}_p}(G_L)$ be the category of $p$-adic representations of $G_L$, and let $\FIL_L$ be the category of finite-dimensional $L$-vector spaces equipped with a decreasing, exhaustive and separated $\mathbb{Z}$-filtration by subspaces. Fontaine defined a contravariant functor
        \[
            \D_\mathrm{dR}^* \colon \REP_{\mathbb{Q}_p}(G_L) \longrightarrow \FIL_L
        \]
        by sending a $p$-adic representation $V$ to $\D_\mathrm{dR}^*(V) = \Hom_{\mathbb{Q}_p[G_L]}(V, \BdR)$, equipped with the induced filtration $\Fil^n\D_\mathrm{dR}^*(V) = \Hom_{\mathbb{Q}_p[G_L]}(V, \Fil^n\BdR)$ for all $n \in \mathbb{Z}$.~We say that a $p$-adic representation $V$ of $G_L$ is de Rham if the $L$-dimension of $\D_\mathrm{dR}^*(V)$ is equal to the $\mathbb{Q}_p$-dimension of $V$. In this case, the jumps of the filtration are the Hodge-Tate weights of $V$. Restricting the latter functor to de Rham representations yields a faithful functor. When $(V, \rho)$ is one-dimensional, being de Rham is equivalent to the existence of a nonzero period $z \in \BdR$ such that $g.z = \rho(g)z$ for all $g \in G_L$. Then its weight is the largest $w \in \mathbb{Z}$ such that $z \in \Fil^w \BdR$.

        \subsubsection{The crystalline period ring} To refine the theory, Fontaine defined a subring $\Bcris$ of $\BdR$ as follows: let $\Acris^\circ$ be the PD-envelope of $\W(\mathcal{R})$ with respect to $\ker(\theta)$. Set $\Acris$ to be its $p$-adic completion; one can show that it is still a subring of $\BdR$. Define $\Bcris^+ = \Acris[1/p]$. The $G_L$-action on $\W(\mathcal{R})$ extends at each previous ring and the latter contains a unique $G_L$-stable $\mathbb{Q}_p$-line on which the group $G_L$ acts by the cyclotomic character. The crystalline period ring
        \[
            \Bcris = \Bcris^+[1/t]
        \]
        is obtained from $\Bcris^+$ by inverting any nonzero $t$ in this line. Unlike in the case of $\BdR$, the Frobenius $\varphi$ on $\W(\mathcal{R})$ extends uniquely to $\Bcris$, where $\varphi(t) = pt$, into an injective $G_L$-equivariant endomorphism $\varphi \colon \Bcris \hookrightarrow \Bcris$ which is semilinear with respect to the Frobenius $\sigma$ of $L_0$, the maximal unramified extension of $\mathbb{Q}_p$ inside $L$. The canonical map $L \otimes_{L_0} \Bcris \rightarrow \BdR$ is injective and $G_L$-equivariant.

        The ring $\Bcris$ will be of primary interest here, as it is rich enough not only to capture but also to classify representations arising from formal groups. Let $\MF_L(\varphi)$ be the category of finite-dimensional $L_0$-vector spaces with a bijective $\sigma$-semilinear Frobenius endomorphism $\varphi$ and whose scalar extension to $L$ is an object of $\FIL_L$. Morphisms in this category are $L_0$-linear maps commuting with $\varphi$ and preserving the filtration after scalar extension. Fontaine defined a contravariant functor
        \[
            \D_\mathrm{cris}^* \colon \REP_{\mathbb{Q}_p}(G_L) \longrightarrow \MF_L(\varphi)
        \]
        by sending a $p$-adic representation $V$ to $\D_\mathrm{cris}^*(V) = \Hom_{\mathbb{Q}_p[G_L]}(V, \Bcris)$, equipped with the Frobenius induced by that on $\Bcris$. One has $L \otimes_{L_0} \D_\mathrm{cris}^*(V) \hookrightarrow \D_\mathrm{dR}^*(V)$. We say that a $p$-adic representation $V$ of $G_L$ is crystalline if the $L_0$-dimension of $\D_\mathrm{cris}^*(V)$ is equal to the $\mathbb{Q}_p$-dimension of $V$. The Hodge-Tate weights of a crystalline representation are its weights as a de Rham representation. Restricting the previous functor to crystalline representations yields an anti-equivalence of categories
        \[
            \D_\mathrm{cris}^* \colon \REP_{\mathbb{Q}_p}^\mathrm{cris}(G_L) \longrightarrow \MF_L^\mathrm{adm}(\varphi)
        \]
        where the superscript “cris'' stands for the crystalline ones and “adm'' for the~admissible ones. Concretely, a one-dimensional filtered $\varphi$-module $D$ with jump at $j \in \mathbb{Z}$ is admissible if and only if $j = \upsilon_p(\varphi(d)/d)$ where $d \in D$ is any nonzero vector.

        \subsubsection{Application to formal groups}\label{S: formal-groups-situation} Given a formal group $H$ of height $h \geqslant 1$ defined  over $\mathcal{O}_L$, the $\mathbb{Q}_p$-scalar extension of its $p$-adic Tate module
        \[
            \V_{\!p}(H) = \mathbb{Q}_p \otimes_{\mathbb{Z}_p} \T_p(H)
        \]
        is a de Rham $G_L$-representation of dimension $h$ and of Hodge-Tate weights $0$ with multiplicity $h-1$ and $1$ with multiplicity $1$. We say that such a representation is of formal group type. Moreover, $\V_{\!p}(H)$ is a crystalline representation of $G_L$. In this paper, we are primarily interested in the integral setting for which the situation is as follows. Denote by $\FGL/\mathcal{O}_L$ the category of formal groups of finite height~defined over $\mathcal{O}_L$ and by $\REP_{\mathbb{Z}_p}^{\mathrm{cris},\mathrm{fgl}}(G_L)$ the category of $G_L$-stable $\mathbb{Z}_p$-lattices in crystalline $G_L$-representations of formal group type. According to \cite[Corollary 6.2.3]{SW13} and \cite[Proposition 1]{Tat67}, the functor
        \[
            \T_p \colon \FGL/\mathcal{O}_L \longrightarrow \REP_{\mathbb{Z}_p}^{\mathrm{cris},\mathrm{fgl}}(G_L)
        \]
        is an equivalence. When $H$ is of height $1$, any endomorphism $s \in \End_{\mathcal{O}_L}(H)$ acts on $\T_p(H)$ by multiplication by its linear coefficient $s'(0) \in \mathbb{Z}_p$, and hence extends to an endomorphism of $\V_{\!p}(H)$. In view of this equivalence and the fact that this scalar extension is faithful, the single coefficient $s'(0)$ together with the $\mathbb{Z}_p$-module structure on $\T_p(H)$ or $\V_{\!p}(H)$ uniquely determine the endomorphism $s$. It also acts on $\D_\mathrm{dR}^*(\V_{\!p}(H))$ in the following way: for a nonzero $v \in \V_{\!p}(H)$ and for a nonzero period $z \in \BdR$ of $\V_{\!p}(H)$, the map induced by $s$ sends $(v \mapsto z)$ to $(v \mapsto s'(0)z)$ since $s(v) = s'(0)v$.  The initial series $s$ is unequivocally determined by this action.

    \subsection{The universal \texorpdfstring{$f^{\circ r}$}{fr}-consistent ring}\label{S: universal-consistent-ring} Denote by $r$ the residue degree of $L/\mathbb{Q}_p$, so that $q = p^r$. To suit this parameter, we adapt the universal $f$-consistent ring in \cite{Spe18} by replacing $f$ with its $r$-th iterate. According to \cite[Corollary 6.2.1]{Lub94}, there exists an invertible power series $a \in \mathscr{S}_0(\mathbb{F}_q)$ satisfying
    \[
        f^{\circ r}(X) \bmod (\pi_L) = a(X^q).
    \]
    We consider the colimit
    \[
        \Aty^\circ(\mathcal{O}_L) = \varinjlim_{n \geqslant 0} \mathcal{O}_L[\![X_n]\!]
    \]
    with the identification $X_n \mapsto f^{\circ r}(X_{n+1})$ for all integer $n \geqslant 0$. We let $\Aty(\mathcal{O}_L)$ be its $\pi_L$-adic completion and equip it with a $G_L$-action given by $g.X_n = [\chi_f(g)]_f(X_n)$.

    \begin{proposition}\label{R: UCR-strict}
        The ring $\Aty(\mathcal{O}_L)$ is a strict $\pi_L$-ring.
    \end{proposition}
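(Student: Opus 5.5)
We have to show that $\Aty(\mathcal{O}_L)$ is $\pi_L$-adically complete and separated, that $\pi_L$ is not a zero divisor in it, and that its residue ring $\Aty(\mathcal{O}_L)/(\pi_L)$ is perfect, meaning that the $q$-power Frobenius is bijective on it. Completeness holds by construction, since $\Aty(\mathcal{O}_L)$ is defined as a $\pi_L$-adic completion; separatedness also follows from that construction. The substantive points are torsion-freeness and perfectness of the residue ring.

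\emph{Torsion-freeness.} Each transition map $\mathcal{O}_L[\![X_n]\!] \to \mathcal{O}_L[\![X_{n+1}]\!]$, $X_n \mapsto f^{\circ r}(X_{n+1})$, is injective. Indeed, $f^{\circ r}$ has nonzero reduction modulo $\pi_L$ (as $f \bmod \mathfrak{m}_K \neq 0$), so it has finite Weierstrass degree, and substituting a nonzero series of this kind is injective. The induced map modulo $\pi_L$, $\mathbb{F}_q[\![X_n]\!] \to \mathbb{F}_q[\![X_{n+1}]\!]$, $X_n \mapsto a(X_{n+1}^q)$, is also injective. Hence the colimit $\Aty^\circ(\mathcal{O}_L)$ is a union of $\pi_L$-torsion-free rings, and $\pi_L$-torsion-freeness passes to the completion. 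For this last step I would use that completion of a $\pi_L$-torsion-free module is $\pi_L$-torsion-free, via the inverse limit of the exact sequences $0 \to A/\pi_L^{k} \xrightarrow{\pi_L} A/\pi_L^{k+1}$.

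\emph{Residue ring.} Next I identify
\[
\Aty(\mathcal{O}_L)/(\pi_L) \cong \Aty^\circ(\mathcal{O}_L)/(\pi_L) = \varinjlim_n \mathbb{F}_q[\![X_n]\!],
\]
where the transition maps send $X_n \mapsto a(X_{n+1}^q)$. This uses the existence of $a$ from \cite[Corollary 6.2.1]{Lub94}.

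\emph{Injectivity of Frobenius.} The ring $\mathbb{F}_q[\![X_n]\!]$ is a domain, so the $q$-power map is injective on each stage. The colimit is a filtered union of domains, so Frobenius is injective on it as well.

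\emph{Surjectivity of Frobenius.} This is the main point, and I expect it to be the step needing care. Take $y \in \mathbb{F}_q[\![X_n]\!]$ and write $y = y(X_n)$. In the next stage it becomes $y(a(X_{n+1}^q))$. Since $a$ has coefficients in $\mathbb{F}_q$, which are fixed by $x \mapsto x^q$, we have $a(X_{n+1}^q) = a(X_{n+1})^q$. Let $y^{(1/q)}$ denote $y$ with its coefficients replaced by their $q$-th roots in $\mathbb{F}_q$; this is again $y$ with coefficients in $\mathbb{F}_q$. Then
\[
y(a(X_{n+1}^q)) = \bigl(y^{(1/q)}(a(X_{n+1}))\bigr)^q,
\]
which exhibits $y$ as a $q$-th power in $\mathbb{F}_q[\![X_{n+1}]\!]$. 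The only point to check is that the composition $y(a(X_{n+1}))$ makes sense. It does, because $a$ has no constant term, being an element of $\mathscr{S}_0(\mathbb{F}_q)$.

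Together these give the three defining properties of a strict $\pi_L$-ring. The subtle points are exactly the commutation of substitution with Frobenius, which uses both that the coefficients lie in $\mathbb{F}_q$ and the congruence $f^{\circ r} \equiv a(X^q)$; this is why $f$ is replaced by $f^{\circ r}$. Invertibility of $a$ is not even needed for perfectness; it guarantees only that the transition maps behave well. Finally, the $G_L$-action plays no role in this statement.
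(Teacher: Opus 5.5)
Your proof is correct and follows the paper's outline. Completeness and torsion-freeness come from the construction; you supply the details the paper leaves as ``by definition'', such as $\pi_L$-torsion-freeness surviving the filtered colimit and the $\pi_L$-adic completion. Perfectness is then reduced to the colimit $\varinjlim \mathbb{F}_q[\![X_n]\!]$ with transitions $X_n \mapsto a(X_{n+1}^q) = a(X_{n+1})^q$.

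The only real difference is the last step. The paper uses that $a$ is invertible to reparametrize each stage by $T_n = a^{\circ n}(X_n)$. Then $\mathbb{F}_q[\![X_n]\!] = \mathbb{F}_q[\![T_n]\!]$ and the transitions become $T_n \mapsto T_{n+1}^q$. This identifies the residue ring with the perfection of $\mathbb{F}_q[\![T]\!]$, which is perfect by construction.

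You instead check bijectivity of Frobenius element by element. Any $y \in \mathbb{F}_q[\![X_n]\!]$ becomes the $q$-th power $y(a(X_{n+1}))^q$ at the next stage, and injectivity comes from the stages being domains.

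The paper's route gives an explicit description of the residue ring. Yours is more elementary and, as you note, only needs $a \in \mathscr{S}_0(\mathbb{F}_q)$. In fact you also don't need injectivity of the transition maps: a filtered colimit of reduced rings is reduced, and filtered colimits preserve torsion-freeness.
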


    \begin{proof}
        The ring $\Aty(\mathcal{O}_L)$ is a strict $\pi_L$-ring if it is $\pi_L$-adically complete, $\pi_L$-torsion free and $\Aty(\mathcal{O}_L)/(\pi_L)$ is a perfect ring. The first two conditions are satisfied by definition. To show the third one, observe that
        \begin{align*}
            \Aty(\mathcal{O}_L)/(\pi_L)
            & \simeq \Aty^\circ(\mathcal{O}_L)/(\pi_L)\\
            & \simeq \varinjlim_{X_n \mapsto a(X^q_{n+1})} \mathcal{O}_L[\![X_n]\!]/(\pi_L)\\
            & \simeq \varinjlim_{X_n \mapsto a(X_{n+1})^q} \mathbb{F}_q[\![X_n]\!]\\
            & \simeq \varinjlim_{a^{\circ n}(X_n) \mapsto a^{\circ n+1}(X_{n+1})^q} \mathbb{F}_q[\![a^{\circ n}(X_n)]\!]. \qedhere
        \end{align*}
    \end{proof}

    \begin{up}
        Endowed with the $(\pi_L, X_0)$-adic topology, every formal~variable $X_n$ is topologically nilpotent in $\Aty(\mathcal{O}_L)$. Furthermore, this ring is universal for $f^{\circ r}$-consistent sequences in the following sense: given a pair $(R, s)$, where $R$ is a $\pi_L$-adically complete, ind-complete adic $\mathcal{O}_L$-algebra, $s = (s_n)_{n \geqslant 0}$ is a $f^{\circ r}$-consistent sequence of topologically nilpotent elements in $R$, there is a unique homomorphism $\phi_s \colon \Aty(\mathcal{O}_L) \rightarrow R$ sending $X_n$ to $s_n$ for all $n \geqslant 0$. Conversely, any homomorphism from $\Aty(\mathcal{O}_L)$ to $R$ arises in this way for some sequence $s$.
    \end{up}

    \subsection{Regularity of the Galois action}\label{S: regularity-GL} We prove that the character $\chi_f = \chi_{f, \pi}$ chosen in \S\ref{S: dynamical-recovery-GL} is crystalline of Hodge-Tate weight $1$. For this, we pass to the ring $\mathcal{R}$ via the map
    \[
        \vartheta_f \colon T_f \longrightarrow \mathcal{R} \colon (y_n)_{n \geqslant 0} \longmapsto (a^{\circ n}(y_{nr} \bmod \pi_L))_{n \geqslant 0}.
    \]
    Then we apply the universal property of the ring $\Aty(\mathcal{O}_L)$ to the pair $(\mathcal{R}, \varpi)$, where $\varpi = (\varpi_n)_{n \geqslant 0}$ is the $f^{\circ r}$-consistent sequence of topologically nilpotent elements in $\mathcal{R}$ given by
    \[
        \varpi_n = (a^{\circ m}(\pi_{(n+m)r} \bmod \pi_L))_{m \geqslant 0}
    \]
    for all $n \geqslant 0$. Let $\phi_\varpi \colon \Aty(\mathcal{O}_L) \rightarrow \mathcal{R}$ be the corresponding homomorphism. As $\pi_L$ is zero in $\mathcal{R}$, this map factors through $(\pi_L)$; it is even its kernel.

    \begin{proposition}
        The previous homomorphism $\phi_\varpi$ is $G_L$-equivariant with kernel $\pi_L\Aty(\mathcal{O}_L)$.
    \end{proposition}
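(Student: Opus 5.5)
We prove the two claims separately: first the $G_L$-equivariance, then the identification of the kernel.

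\emph{Equivariance.} By the universal property, two continuous homomorphisms out of $\Aty(\mathcal{O}_L)$ agree as soon as they agree on every $X_n$. So it suffices to check that $\phi_\varpi(g.X_n) = g.\varpi_n$ for every $g \in G_L$ and $n \geqslant 0$.

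The left-hand side is $[\chi_f(g)]_f(\varpi_n)$. This expression makes sense because $[\chi_f(g)]_f$ is a $\mathbb{Z}_p$-iterate $u_g$ of $u$ (Proposition \ref{R: simple-transitive-action-u-Sfpi}), so it has coefficients in $\mathcal{O}_K$. Moreover, $u_g$ commutes with $f$, hence with $a$ modulo $\pi_L$.

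For the right-hand side, $g$ acts on $\mathcal{R}$ componentwise. On each component,
\[
g.\bigl(a^{\circ m}(\pi_{(n+m)r} \bmod \pi_L)\bigr) = a^{\circ m}\bigl(g.\pi_{(n+m)r} \bmod \pi_L\bigr),
\]
since $a$ has coefficients in $\mathbb{F}_q \subset \mathcal{O}_L/(\pi_L)$, which is fixed by $G_L$. We then use the defining relation $g.\pi = u_g(\pi)$ entrywise, that is, $g.\pi_k = u_g(\pi_k)$ for all $k$. Commuting $u_g \bmod \pi_L$ past $a^{\circ m}$ gives
\[
g.\varpi_n = (u_g \bmod \pi_L)(\varpi_n) = \phi_\varpi\bigl(u_g(X_n)\bigr).
\]
Here $u_g$ evaluated on $\mathcal{R}$ converges because $\varpi_n$ is topologically nilpotent. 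One point needs checking: the map $\vartheta_f$ and the componentwise formula are compatible with evaluating a power series with $\mathcal{O}_L$-coefficients. This holds because each map $y \mapsto y \bmod \pi_L$ is a ring homomorphism and $a^{\circ m}$ commutes with $u_g \bmod \pi_L$.

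\emph{Kernel.} We have $\pi_L \in \ker \phi_\varpi$, so $\phi_\varpi$ factors through the ring $\Aty(\mathcal{O}_L)/(\pi_L)$. By the proof of Proposition \ref{R: UCR-strict}, this ring is a directed union of the power series rings $\mathbb{F}_q[\![X_n]\!]$ along injective transition maps $X_n \mapsto a(X_{n+1})^q$. Injectivity holds because $a$ is invertible and Frobenius is injective on a domain.

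It therefore suffices to show that each restriction $\mathbb{F}_q[\![X_n]\!] \to \mathcal{R}$, $X_n \mapsto \varpi_n$, is injective. Since $\mathbb{F}_q[\![X_n]\!]$ is a DVR, this reduces to showing $\varpi_n \neq 0$ and using that $\mathcal{R}$ is a domain with a valuation. A nonzero series $h = X_n^k \cdot(\text{unit})$ then maps to $\varpi_n^k$ times a unit, which is nonzero as soon as $\upsilon_\mathcal{R}(\varpi_n)$ is finite and positive.

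We compute this valuation from Proposition \ref{R: roots-iterates-f}. The valuation $\upsilon_K(\pi_k)$ equals $e/(p^k-p^{k-1})$ for $k \geqslant 1$. Hence the $m$-th component of $\varpi_n$ has a lift $\widehat{x}_m$ of $p$-adic valuation comparable to $1/(p^{(n+m)r-1}(p-1))$, which lies below $\upsilon_p(\pi_L)$ for large $(n+m)$. Since $a^{\circ m}$ is invertible, it preserves valuations of small elements. Therefore
\[
q^m \upsilon_p(\widehat{x}_m) = \frac{p^{mr}}{p^{(n+m)r-1}(p-1)} = \frac{1}{q^n\, p^{-1}(p-1)},
\]
which is finite and positive.

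The main obstacle is this last valuation bookkeeping. One has to check carefully that the tilt coordinates are genuinely given by $a^{\circ m}$ applied to the reductions. In particular, $a^{\circ m}(y)^{q} \equiv a^{\circ m-1}(a(y^q))$, and the relation $f^{\circ r} \equiv a(X^q)$ makes the sequence a valid element of $\mathcal{R}$. One also has to control the valuation through $a$, using that $a$ is invertible with nonzero linear term.

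Once the kernel of the reduction is trivial, we conclude $\ker \phi_\varpi = \pi_L\Aty(\mathcal{O}_L)$.
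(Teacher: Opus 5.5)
Your proof is correct and follows the paper's route. Equivariance comes from $[\chi_f(g)]_f = u_g$ commuting with $f$, hence $u_g \bmod \pi_L$ commuting with $a$, together with $g.\pi = u_g(\pi)$. The kernel is handled by reducing to injectivity of each $\mathbb{F}_q[\![X_n]\!] \to \mathcal{R}$, $X_n \mapsto \varpi_n$. The only variation is at the last step: you write a nonzero series as $X_n^k$ times a unit and use $0 < \upsilon_\mathcal{R}(\varpi_n) = p/(q^n(p-1)) < \infty$ in the domain $\mathcal{R}$, whereas the paper, following Berger, argues that $\varpi_n$ is not algebraic over $\mathbb{F}_p$; your version makes explicit the passage from a power series in the kernel to a polynomial relation that the paper leaves implicit.
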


    \begin{proof}
        The first assertion follows from the fact that all the power series $[\chi_f(-)]_f$ commute with $f$. The second is the same as in \cite[Lemma 2.2]{Ber16}: if the induced map $\mathbb{F}_q[\![X_n]\!] \rightarrow \mathcal{R}$ were not injective, there would exist a nonzero $P(X_n) \in \mathbb{F}_q[X_n]$ such that $P(\overline{\phi_\varpi}(X_n)) = 0$ and this would imply that $\varpi_n$ is algebraic over $\mathbb{F}_p$, which is not the case.
    \end{proof}

    Proposition \ref{R: UCR-strict} asserts that $\Aty(\mathcal{O}_L)$ is a strict $\pi_L$-ring, consequently the factorization of $\phi_\varpi$ through its kernel lifts to an injective $G_L$-equivariant homomorphism $\Aty(\mathcal{O}_L) \hookrightarrow \AinfL$. Under this embedding, we let $x_n$ be the image of $X_n$ for every integer $n \geqslant 0$. The action of the group $G_L$ on these points is particularly interesting, as we shall prove that applying the $f$-logarithm to them produces periods of $\chi_f$. As it stands,
    by \cite[Lemma 3.2]{Ber14}, we already know that
    \[
        t_f = \Log_f(x_0) \in \BdR^+.
    \]
    This series converges in $\BdR^+$ since $\Log_f$ converges on $\mathfrak{m}_{\mathbb{C}_p\!}$ and $\theta(x_n) \equiv \pi_{nr} \bmod (\pi_L)$ for all $n \geqslant 0$. Considering $\Log_f(x_n)$ for a different integer $n > 0$ would yield a~$\mathbb{Q}_p$-multiple of $t_f$. Note that $t_f$ depends not only on $f$ but also on the choice of the consistent sequence $\pi$ in Subsection \ref{S: dynamical-recovery-GL}. We proceed to show that it is a nonzero period of $\chi_f$.

    \begin{proposition}
        The character $\chi_f\!$ of $G_L\!$ is crystalline of Hodge-Tate weight $1$.
    \end{proposition}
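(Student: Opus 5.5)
The plan is to show that $t_f = \Log_f(x_0) \in \BdR^+$ is a nonzero period of $\chi_f$ lying in $\Fil^1\BdR$ but not in $\Fil^2\BdR$, and then to upgrade it to a crystalline period. This follows Specter's argument in \cite{Spe18} and its Lubin–Tate analogue in \cite{Ber14}.

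\textbf{Equivariance.} The $G_L$-equivariance of $\Aty(\mathcal{O}_L) \hookrightarrow \AinfL$ gives $g.x_0 = [\chi_f(g)]_f(x_0)$. Since $\Log_f \circ [m]_f = m\Log_f$ and $\Log_f$ has $K$-coefficients, continuity of the $G_L$-action on $\BdR^+$ yields
\[
g.t_f = \Log_f([\chi_f(g)]_f(x_0)) = \chi_f(g)\,t_f .
\]
Here one must justify evaluating the $K$-series $[m]_f$ at $x_0$. I would instead argue on the limit $\Log_f(x_n) = p^{-nr}$-type multiples of $t_f$. Concretely, $\Log_f(f^{\circ r}(y)) = f'(0)^r\Log_f(y)$ gives $t_f = f'(0)^{nr}\Log_f(x_n)$, and $g$ acts on each $x_n$ through the integral iterate $u_g$ modulo high powers, approximated by $\mathbb{Z}$-iterates of $u$, which are integral.

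\textbf{Filtration.} Since $\theta(x_0) = \pi_0 = 0$, we have $x_0 \in \ker\theta$, hence $t_f \in \Fil^1\BdR$. To show $t_f \notin \Fil^2$, I would compute $\theta$ of the derivative. We have $\Log_f(x_0) = x_0\cdot(\text{unit in }\BdR^+ \text{ modulo } \ker\theta)$, because $\Log_f(X) = X + \dots$ and the higher terms are divisible by $x_0^2$. So it suffices to show that $x_0$ generates $\ker\theta$ modulo $\ker(\theta)^2$.

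For this I would use $x_0 = f^{\circ nr}(x_n)$ with $\theta(x_n) = \pi_{nr} \neq 0$ a simple root of $f^{\circ nr}$. Simplicity gives $(f^{\circ nr})'(\pi_{nr}) \neq 0$, so $x_0 = (f^{\circ nr})'(\pi_{nr})\,(x_n - \pi_{nr}) + O((x_n - \pi_{nr})^2)$, and $x_n - \pi_{nr} \in \ker\theta$. It remains to see that some element of $\ker\theta \setminus \ker(\theta)^2$ appears. This is the main obstacle. I would argue via the valuations in Proposition~\ref{R: roots-iterates-f} that $x_0/\xi$ is a unit in $\BdR^+$ for $\xi$ a generator of $\ker\theta$, in the style of \cite[Lemma 3.2]{Ber14}. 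A cleaner alternative is to compare $t_f$ with the period of a Lubin–Tate or cyclotomic character: the Hodge–Tate weight is read off from the Sen weight, i.e.\ from $\chi_f$ on inertia. Because $g.\pi = [\chi_f(g)]_f(\pi)$ and $\Log_f'$ is integral by \cite[Lemma 2.2]{Ber19}, the $\mathbb{C}_p$-line spanned by the differential $d\Log_f$ at the torsion points gives a Hodge–Tate period of weight exactly $1$.

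\textbf{Crystallinity.} Here I would follow \cite{Spe18}: show that $t_f$, or a suitable $\varphi$-iterate, lies in $\Bcris^+$. The integrality of $\Log_f'$ gives $\Log_f = \sum a_k X^k$ with $\upsilon_p(a_k) \geq -\upsilon_p(k)$. Since $x_0^{p}$ is divisible by $p$ in $\Acris$ up to PD-powers (as $x_0 \in \ker\theta$ and $x_0^q \equiv$ something in $(\pi_L, \ker\theta)$), the series $\sum a_k x_0^k$ converges in $\Bcris^+$. Thus $t_f \in \Bcris^+$ with $g.t_f = \chi_f(g)t_f$, which makes $\D_\mathrm{cris}^*$ of the character one-dimensional, so $\chi_f$ is crystalline. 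Its weight is $1$ by the filtration step; consistently, $\varphi^r$ acts on the period by $f'(0)^r$ up to units, of valuation $r$, matching admissibility with jump $1$.
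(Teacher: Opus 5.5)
You match the paper on equivariance and on $t_f\in\Fil^1\BdR$. (Your worry about evaluating $[\chi_f(g)]_f$ at $x_0$ is unnecessary: it is the $\mathbb{Z}_p$-iterate $u_g$, which has coefficients in $\mathcal{O}_K$.) You also correctly reduce weight exactly $1$ to $x_0\notin\ker(\theta)^2$. But that step, which you yourself call the main obstacle, is never proved, and none of your sketches closes it.

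\emph{The Taylor expansion.} It only trades $x_0$ for $x_n-\pi_{nr}$, about which you know no more.

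\emph{The valuation route.} It cannot work as stated. We have $\upsilon_\mathcal{R}(\varpi_0)=p/(p-1)$, whereas the generator $\xi=[\tilde\pi_L]-\pi_L$ of $\ker\theta\cap\AinfL$ reduces to an element of valuation $1/e_L$. So $x_0/\xi$ is a nonunit of $\AinfL$, and whether it is a unit of $\BdR^+$, i.e.\ whether $\theta(x_0/\xi)\neq 0$, is not determined by reductions modulo $\pi_L$.

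\emph{The paper's use of valuations.} It uses the valuation only as an obstruction to divisibility by $\xi^2$. Weight $\geqslant 2$ would force $\upsilon_\mathcal{R}(\varpi_0)\geqslant 2$, contradicting $p/(p-1)<2$ for odd $p$; the paper handles $p=2$ via the next component of $x_0$. Be aware that the threshold $2$ belongs to the generator $[\tilde p]-p$ of $\ker\theta$ on $\W(\mathcal{R})$. For an element of $\AinfL$ outside $\W(\mathcal{R})$ this method only gives $2/e_L$. So if $x_0\notin\W(\mathcal{R})$, as one expects when $K/\mathbb{Q}_p$ is ramified, that inequality needs further justification too.

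\emph{The Frobenius remark.} Your closing remark points at a sound mechanism: for a crystalline character, admissibility forces the filtration jump to equal the slope of $\varphi$ on $\D^*_\mathrm{cris}$. To use it, you must prove that $\varphi^r$ multiplies $t_f$ by a scalar of valuation $r$, not merely cite it as a consistency check. This is not automatic. Modulo $\pi_L$ one has $f^{\circ r}\equiv a(X^q)$ rather than $X^q$, so $\varphi^r(x_0)\equiv\varpi_0^q$ differs from $f^{\circ r}(x_0)\equiv a(\varpi_0^q)$ as soon as $a\neq X$.

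\emph{The Sen/differential alternative.} This is only named, not argued.

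The crystallinity step contains an error the paper explicitly warns about: $t_f\in\Bcris^+$ is false in general. The coefficients of $\Log_f$ lie in $K$, and $x_0$ lies in $\AinfL=\mathcal{O}_L\otimes_{\W(\mathbb{F}_q)}\W(\mathcal{R})$ rather than in $\W(\mathcal{R})$. Meanwhile $\Bcris\cap K\subseteq\Bcris^{G_K}=K_0$. Your estimate ``$x_0^p$ divisible by $p$ in $\Acris$'' only makes sense for $x_0\in\ker\theta\cap\W(\mathcal{R})$, which is Specter's unramified situation.

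What the integrality of $\Log_f'$ actually gives is $t_f\in L\otimes_{L_0}\Bcris^+$. The missing step is the paper's. Since $G_L$ acts trivially on $L$,
\[ \Hom_{\mathbb{Q}_p[G_L]}(\mathbb{Q}_p(\chi_f),L\otimes_{L_0}\Bcris)=L\otimes_{L_0}\D^*_\mathrm{cris}(\mathbb{Q}_p(\chi_f)), \]
so the nonzero element $t_f$ of the left side forces $\D^*_\mathrm{cris}(\mathbb{Q}_p(\chi_f))\neq 0$. Nonvanishing of $t_f$ should also be stated explicitly; it is easy, since $t_f=x_0\cdot(\text{unit of }\BdR^+)$ and $x_0\neq 0$ because its reduction $\varpi_0$ is nonzero.
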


    \begin{proof}
        The previous convergence in $\BdR^+$ is with respect to the $\ker(\theta)$-adic topology. So there exists some $m \geqslant 1$ for which $\theta(b_m x_0^m) = b_m\theta(x_0)^m = 0$, implying $\theta(x_0) = 0$, where $b_m \in K^\times$ is the $m$-th coefficient of the formal power series $\Log_f$. Therefore $\theta(t_f) = 0$ and hence $\chi_f$ has weight at least $1$. If it had weight $2$, the $\upsilon_\mathcal{R}$-valuation of $x_0 \bmod (\pi_L) = \varpi_0$ would be at least $2$, however
        \begin{align*}
            \upsilon_\mathcal{R}(\varpi_0)
            & = \lim_{m \to +\infty} q^m\upsilon_p(\pi_{mr})\\
            & = \lim_{m \to +\infty} p^{mr}\frac{1}{p^{mr-1}(p-1)}\\
            & = \frac{p}{p-1}
        \end{align*}
        is less than $2$ when $p$ is odd, a contradiction. For $p=2$, it suffices to compute the next component of $x_0$ and this leads to a contradiction. In particular $t_f$ is nonzero. For every $g \in G_L$, we have
        \[
            g.t_f = \Log_f(g.x_0) = \Log_f([\chi_f(g)]_f(x_0)) = \chi_f(g)t_f
        \]
        and hence $\chi_f$ is de Rham of Hodge-Tate weight $1$. To show that $\chi_f$ is crystalline, first observe that $t_f$ lies in $L \otimes_{L_0} \Bcris^+$ because $\theta(x_0) = 0$ and $\Log_f' \in \mathcal{O}_K[\![X]\!]$. This implies the existence of a crystalline period since
        \begin{align*}
            \dim_{\mathbb{Q}_p} \mathbb{Q}_p(\chi_f)
            & = \dim_{L} \Hom_{\mathbb{Q}_p[G_L]}(\mathbb{Q}_p(\chi_f), L \otimes_{L_0} \Bcris)\\
            & = \dim_{L} L \otimes_{L_0} \Hom_{\mathbb{Q}_p[G_L]}(\mathbb{Q}_p(\chi_f), \Bcris)\\
            & = \dim_{L_0} \D^*_\mathrm{cris}(\mathbb{Q}_p(\chi_f)).\qedhere
        \end{align*}
    \end{proof}

    \begin{remark}
        While in \cite{Spe18} the period $t_f$ is in $\Bcris^+$, this is no more the case here in general. A major obstruction arises from the fact that the formal power series $\Log_f$ is not, \textit{a priori}, defined over the unramified part of $K/\mathbb{Q}_p$ when $e > 1$.
    \end{remark}

    \subsection{Defining the Tate module structure}\label{S: define-Tate-module} Let us summarize the situation up to this point. Starting from $(f, u)$, we have, over a finite extension $L/K$, recovered a restriction of the action of $G_K$ on cells of $T_f$ via the action of the $\mathbb{Z}_p$-iterates of $u$. The chosen resulting $G_L$-character $\chi_f$ is such that $\mathbb{Q}_p(\chi_f)$ is crystalline of weight $1$, and thus encodes the data of a formal group. Nevertheless, this construction severs the direct link with the initial set $T_f$. Our final goal for this section is to relate the information carried by $\mathbb{Z}_p(\chi_f)$ back to the $G_L$-set $T_f$, thereby endowing it with a $\mathbb{Z}_p$-module structure that enables the recovery of the latent formal group.

        \subsubsection{A pivotal formal group} Since $\chi_f$ is crystalline of Hodge-Tate weight $1$, there exists a height-one formal group $H$ defined over $\mathcal{O}_L$ such that
        \[
            \T_p(H) \simeq \mathbb{Z}_p(\chi_f)
        \]
        as stated in Paragraph \ref{S: formal-groups-situation}. This formal group may not be the one we are looking for but, if Lubin's conjecture holds, it would be isomorphic to it. We will transport the structure of $\T_p(H)$ onto $T_f$. Any other choice of formal group $H'$ over $\mathcal{O}_L$ such that $\T_p(H') \simeq \mathbb{Z}_p(\chi_f)$ would induce the same structure on $T_f$. Since $\T_p(H)$ and $\mathbb{Z}_p(\chi_f)$ are isomorphic free $\mathbb{Z}_p[G_L]$-modules of rank $1$, we obtain:

        \begin{lemma}\label{R: Galois-action-TpH}
            The action of $G_L\!$ on $\T_p(H)\!$ is given by the character $\chi_f \colon G_L \rightarrow \mathbb{Z}_p^\times\!$.
        \end{lemma}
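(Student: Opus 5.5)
The plan is to read the statement off directly from the isomorphism $\T_p(H) \simeq \mathbb{Z}_p(\chi_f)$ of $\mathbb{Z}_p[G_L]$-modules supplied by the equivalence of Paragraph \ref{S: formal-groups-situation}. By definition, $\mathbb{Z}_p(\chi_f)$ is the free $\mathbb{Z}_p$-module of rank $1$ on which $g \in G_L$ acts by multiplication by $\chi_f(g)$. So I would fix a $G_L$-equivariant $\mathbb{Z}_p$-linear isomorphism $\iota \colon \mathbb{Z}_p(\chi_f) \to \T_p(H)$ and let $v = \iota(1)$, which generates $\T_p(H)$ as a $\mathbb{Z}_p$-module.

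For $g \in G_L$ and $m \in \mathbb{Z}_p$, equivariance and linearity give
\[
    g.(mv) = \iota\big(g.m\big) = \iota\big(\chi_f(g)m\big) = \chi_f(g)\,mv .
\]
Hence $g$ acts on all of $\T_p(H)$ as multiplication by $\chi_f(g)$. Because $\T_p(H)$ has rank $1$, its $\mathbb{Z}_p$-linear automorphisms are scalars in $\mathbb{Z}_p^\times$. The scalar by which $g$ acts therefore does not depend on the chosen generator or on the chosen isomorphism $\iota$: two isomorphisms differ by a unit scalar, and scalars commute with the Galois action.

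The only care needed is to record that the action of $G_L$ on $\T_p(H)$, coming from the action on torsion points of $H$ in $\mathfrak{m}_{\mathbb{C}_p}$, is $\mathbb{Z}_p$-linear. This holds because $H$ is defined over $\mathcal{O}_L$, so $G_L$ commutes with $[m]_H$ and with the group law. With that, the conclusion is that $g.x = \chi_f(g)x = [\chi_f(g)]_H(x)$ for all $x \in \T_p(H)$. I do not expect a genuine obstacle here; the statement is essentially a reformulation of the chosen isomorphism.
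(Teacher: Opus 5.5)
Your proposal is correct and follows the same route as the paper: the lemma is read off directly from the isomorphism $\T_p(H) \simeq \mathbb{Z}_p(\chi_f)$ of free rank-one $\mathbb{Z}_p[G_L]$-modules. Your extra remarks, that the scalar does not depend on the chosen isomorphism and that the Galois action is $\mathbb{Z}_p$-linear because $H$ is defined over $\mathcal{O}_L$, are details the paper leaves implicit.
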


        We certainly wish $(f, u)$ to remain a pair of endomorphisms of $T_f$ after transporting the structure. To this end, we identify the unique endomorphisms of $H$ to which $f$ and $u$ should correspond by examining how they act on the constructed period $t_f$, which is a shared data. Indeed, $t_f$ was constructed from the dynamical system and by Lemma \ref{R: Galois-action-TpH} it is also a period of $\T_p(H)$. Let $s \in \{f, u\}$; therefore $s$ acts naturally on $T_f$. We want to define $s(t_f)$ arising from this action in a manner compatible with the construction of $t_f$. Recall that in \S\ref{S: regularity-GL} we have established this path in order to obtain a period:
        \[
            \begin{tikzcd}[row sep=2mm, column sep=5mm]
                & \Aty(\mathcal{O}_L) \ar[rr, hookrightarrow] && \AinfL \ar[rr, dashed, "\Log_f"] && L \otimes_{L_0} \Bcris^+\\
                \\
                & X_0 \ar[rr, mapsto]\ar[dddr, mapsto] && x_0 \ar[rr, mapsto]\ar[dddl, mapsto] && t_f\\
                T_f && \mathcal{R}\ar[from=uuur, twoheadrightarrow, crossing over]\ar[from=uuul, crossing over, "\phi_\varpi"] \ar[from=ll, crossing over, "\vartheta_f"]\\
                \\
                \pi \ar[rr, mapsto] && \varpi_0
            \end{tikzcd}
        \]
        where the dashed line is defined for all $x \in \AinfL \cap \ker(\theta)$. Considering maps in the same order as we did previously, $s(\pi)$ is sent to $(a^{\circ m}(s(\pi_{mr}) \bmod \pi_L))_{m \geqslant 0}$ which is~well defined in $\mathcal{R}$ because $s \bmod (\pi_L)$ commutes with $a$. This element is nothing but $s(\varpi_0)$ and is the image of $s(X_0)$ under $\phi_\varpi$. Hence these maps agree after action of $s$. By functoriality of the Witt vectors functor, $s(X_0)$ is sent to $s(x_0)$ and lifts $s(\varpi_0)$ modulo $(\pi_L)$. As $s(x_0) \in \ker(\theta)$, we let
        \[
            s(t_f) = \Log_f(s(x_0)) = s'(0)t_f.
        \]
        This definition is natural and coincides with the action of a formal group endomorphism on its period, as discussed in Paragraph \ref{S: formal-groups-situation}. It thus establishes how $f$ and $u$ act on $t_f$. From the perspective of $H$, we assert that $s'(0)t_f$ arises from $s_H$, the multiplication-by-$s'(0)$ endomorphism of $H$. Indeed, the composite of the functors considered in \S\ref{S: Fontaine-period-rings} is faithful:
        \[
            \begin{tikzcd}[row sep=11mm, column sep=13mm]
                & \REP_{\mathbb{Q}_p}^{\mathrm{cris},\mathrm{fgl}}(G_L) \ar[r, shift left=.6ex, "\D^*_\mathrm{cris}"]
                & \MF_L^{\mathrm{adm},\mathrm{fgl}}(\varphi) \ar[l, shift left=.6ex]
                \\
                \FGL/\mathcal{O}_L \ar[r, shift left=.6ex, "\T_p"]
                & \REP_{\mathbb{Z}_p}^{\mathrm{cris},\mathrm{fgl}}(G_L) \ar[l, shift left=.6ex]\ar[u, hookrightarrow, "\mathbb{Q}_p \otimes_{\mathbb{Z}_p} -"]
            \end{tikzcd}
        \]
        where the superscript ‘‘fgl'' stands for the formal group type condition on the weights or jumps in $\{0,1\}$, and $s_H$ acts in the same way on all de Rham periods of $\V_{\!p}(H)$, including the crystalline ones. We must therefore associate $f$ with $f_H$ when defining a bijection between $T_f$ and $\T_p(H)$. We also want this bijection to be $G_L$-equivariant. However, Lemma \ref{R: Galois-action-TpH} implies that the action of $G_L$ on $\T_p(H)$ is given by the one of the $\mathbb{Z}_p$-iterates of $u_H$, independently of the cell. Hence we must take into account the weak knowledge (\ref{fact-2}) in order to be compatible with the existing action of $G_L$ on $T_f$, although this does not guarantee that $u$ is associated to $u_H$ nor that it becomes an endomorphism of $T_f$ after transport of structure. Nevertheless, it suffices that $f$ be a formal group endomorphism for $u$ to be one as well.

        \subsubsection{Transport of structure} Since $H$ has height $1$, its group $H[p^n]$ of $p^n$-torsion points in $\mathfrak{m}_{\mathbb{C}_p\!}$ has exactly $p^n$ elements, and these points are also the roots of $f^{\circ n}_H$ because $f'(0)$ is a uniformizer in $\mathbb{Z}_p$. To define a bijection between $T_f$ and $\T_p(H)$ more easily, we reinterpret the latter Tate module by taking the transition maps to be $f_H$ instead of $[p]_H$:
        \[
            \T_p(H) = \varprojlim_{x \mapsto [p]_H(x)} H[p^n] \simeq \varprojlim_{x \mapsto f_H(x)} H[p^n]
        \]
        which is legitimate since $f'(0)$ is a uniformizer in $\mathbb{Z}_p$. Considering the above facts, it is clear that such a bijection exists. We now proceed to construct one satisfying our constraints.

        \begin{proposition}
            There exists a $\mathbb{Z}_p[G_L]$-module structure on the set $T_f$ for which it is a crystalline $G_L$-character of Hodge-Tate weight $1$ and $f \in \End_{\mathbb{Z}_p[G_L]}(T_f)$.
        \end{proposition}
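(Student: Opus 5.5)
We will construct an explicit bijection $\Psi \colon T_f \to \T_p(H)$, presented as $\varprojlim_{f_H} H[p^n]$, which sends $0$ to $0$, intertwines $f$ with $f_H$ componentwise, and is $G_L$-equivariant. We then transport the $\mathbb{Z}_p$-module structure along it. Once such a $\Psi$ exists, the transported structure makes $T_f$ a free rank-one $\mathbb{Z}_p$-module with $G_L$ acting through $\chi_f$ (Lemma \ref{R: Galois-action-TpH}), so it is crystalline of weight $1$. Moreover, $f = \Psi^{-1}\circ f_H\circ\Psi$ is $\mathbb{Z}_p[G_L]$-linear because $f_H$ is multiplication by $f'(0)$. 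The whole task therefore reduces to constructing $\Psi$.

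\textbf{Reduction to $S_f$.} Every nonzero element of $T_f$ is uniquely of the form $(0,\dots,0,y_1,y_2,\dots)$, with $k$ initial zeros followed by some $y \in S_f$, that is, $f^{\circ k}$ applied along the shift. The same holds in $\T_p(H)$ with the generators $S_{f_H}$ (the elements with nonzero first entry). So it suffices to give a $G_L$-equivariant bijection $S_f \to S_{f_H}$ compatible with the shifts. Since $f$ and $f_H$ act by shifting and are defined over $\mathcal{O}_K \subset \mathcal{O}_L$, equivariance then extends automatically, and $\Psi$ commutes with $f$ by construction.

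\textbf{Cellwise construction.} Fix representatives $\alpha_1 = \pi, \alpha_2,\dots,\alpha_j \in S_f$, one for each root of $f^{\circ\ell}/f^{\circ\ell-1}$, so that the cells $S_{f,\alpha_i}$ partition $S_f$. By Proposition \ref{R: transitive-action-GK-Sf}, choose $\sigma_i \in G_K$ with $\alpha_i = \sigma_i.\pi$. On $\T_p(H)$ we have a partition of generators by their $\ell$-th entry, which are the $p^\ell-p^{\ell-1}$ primitive roots of $f_H^{\circ\ell}$. Choose a generator $\eta \in S_{f_H}$. We decree
\[
\Psi\bigl(u^{\circ m}(\alpha_i)\bigr) = [\,c_i\,u'(0)^m\,]_H(\eta), \qquad c_i\in\mathbb{Z}_p^\times,
\]
where the $c_i$ are chosen so that the images of the distinct cells have distinct $\ell$-th entries and cover all of them. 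Since $u'(0)\in 1+p\mathbb{Z}_p$ generates $1+p^{\ell}\mathbb{Z}_p$, this amounts to taking the $c_i$ as coset representatives of $\mathbb{Z}_p^\times/(1+p^\ell\mathbb{Z}_p)$, which has exactly $j$ elements. Bijectivity on each cell follows from Proposition \ref{R: simple-transitive-action-u-Sfpi}.

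\textbf{Equivariance and compatibility with $f$.} For $g\in G_L$ we have $g.\alpha_i = \sigma_i(\sigma_i^{-1}g\sigma_i).\pi$, and $\sigma_i^{-1}g\sigma_i\in G_L$ because $L/K$ is Galois. Using that $u$ is defined over $\mathcal{O}_K$ and relation (\ref{fact-2}), we get $g.\alpha_i = u_{g,i}(\alpha_i)$ with $u_{g,i}'(0)=\chi_{f,\alpha_i}(g)$. Equivariance of $\Psi$ on the cell of $\alpha_i$ then requires $\chi_{f,\alpha_i}(g)=\chi_f(g)$.

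\textbf{Main obstacle.} This identity, $\chi_{f,\alpha_i}=\chi_f$ on $G_L$, is where I expect the real difficulty. I would attack it first by comparing periods. Running the construction of \S\ref{S: regularity-GL} with $\alpha_i$ in place of $\pi$ gives a period $\sigma_i(t_f)$ of $\chi_{f,\alpha_i}$, so $\chi_{f,\alpha_i}=\chi_f\circ\mathrm{Ad}(\sigma_i)$ is again crystalline of weight $1$. Both characters are crystalline of weight $1$, and restricted to inertia they are given by Lubin–Tate-type characters. These are determined by the same filtered $\varphi$-module up to conjugation by $\sigma_i$. By Fact (\hyperlink{fact-1}{1}), conjugation acts trivially on the $\mathbb{Z}_p$-valued determinant/Frobenius eigenvalue. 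This leaves only an unramified twist, which I would kill using that $u$ and the $G_L$-action both fix the $\ell$-th level and that $u'(0)$ lies in $\mathbb{Z}_p$.

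If this comparison fails, the fallback is to replace $\chi_f$-equivariance by the weaker requirement of (\ref{fact-2}). In that case one accepts that the structure is only transported cell by cell, and checks directly that the group laws coming from different cells agree via the compatibility with $f$: the map $f$ sends $S_f$ onto the level-$0$-shifted set and merges cells at level $\ell-1$. This forces the $c_i$ to be consistent with one another.
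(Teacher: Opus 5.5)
Your construction is the paper's: formula (\ref{fact-3}) likewise builds the bijection cell by cell from chosen base points $\alpha_i$, extends it along the injective shift $f$, and transports the structure. You parametrise each cell by $u^{\circ\mathbb{Z}_p}$ rather than by $G_L$-translates, which makes bijectivity on cells immediate, and your reductions are fine.

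The gap is exactly the step you flag and leave open. On the cell of $\alpha_i$, $G_L$-equivariance of $\Psi$ is equivalent to $\chi_{f,\alpha_i}=\chi_f$ on $G_L$. Some input of this kind is unavoidable, so your fallback cannot get around it. By freeness of the $u$-action, every point of $S_{f,\alpha_i}$ has $G_L$-stabiliser $\ker\chi_{f,\alpha_i}$. But in any rank-one $\mathbb{Z}_p[G_L]$-structure compatible with the given Galois action, all nonzero elements have the same stabiliser. Your suggested arguments do not prove the identity either. Fact (1) concerns linear coefficients of endomorphisms of a height-one formal group, not $G_K$-conjugates of a character. And knowing that both characters take values in $1+p^\ell\mathbb{Z}_p$ only shows that their unramified ratio is trivial modulo $p^\ell$, not that it is trivial. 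The paper also just cites (\ref{fact-2}) here, but (\ref{fact-2}) only gives $\chi_{f,\alpha_i}(g)=\chi_f(\sigma_i^{-1}g\sigma_i)$.

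The missing step is short. Since $\chi_f$ is crystalline of weight $1$, $\delta=\chi_f\,\chi_{\mathrm{cyc}}^{-1}|_{G_L}$ is crystalline of weight $0$, hence unramified, i.e.\ trivial on $I_L$. Let $\sigma\in G_K$ and $g\in G_L$.

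First, $\chi_{\mathrm{cyc}}(\sigma^{-1}g\sigma)=\chi_{\mathrm{cyc}}(g)$, because $\chi_{\mathrm{cyc}}$ is a character of the whole of $G_K$.

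Second, $\kappa=\sigma^{-1}g\sigma g^{-1}$ lies in $G_L$ since $L/K$ is Galois, and in $I_K$ since $G_K/I_K$ is abelian. Hence $\kappa\in G_L\cap I_K=I_L$, so $\delta(\sigma^{-1}g\sigma)=\delta(\kappa g)=\delta(g)$.

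Thus $\chi_f$ is invariant under $G_K$-conjugation, and (\ref{fact-2}) gives $\chi_{f,\alpha_i}=\chi_f$ on $G_L$ for every $i$. Then $\Psi(g.u^{\circ m}(\alpha_i))=[\chi_f(g)\,c_i\,u'(0)^m]_H(\eta)=g.\Psi(u^{\circ m}(\alpha_i))$ by Lemma \ref{R: Galois-action-TpH}. With this inserted, the rest of your argument goes through, and the fallback paragraph should be dropped.

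A minor point: $u'(0)$ topologically generates $1+p^\ell\mathbb{Z}_p$ only when $p$ is odd or $\ell\geqslant 2$, so for $p=2$ you should normalise $u'(0)\in 1+4\mathbb{Z}_2$.
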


        \begin{proof}
            Analogously to the constructions in \S\ref{S: dynamical-recovery-GL}, we define $S_H$ to be the subset of $\T_p(H)$ of elements with a nonzero second entry. Given $\beta = (\beta_n)_{n \geqslant 0} \in S_H$, let
            \[
                S_{H, \beta} = \{(y_n)_{n \geqslant 0} \in \T_p(H) \mid y_0 = \beta_0, \dots, y_\ell = \beta_\ell\} \subset S_H
            \]
            be a cell. In order to define a $G_L$-equivariant bijection $\tau \colon T_f \rightarrow \T_p(H)$ transporting $f$ to $f_H$, we begin by matching $S_f$ with $S_H$. Set $j = p^\ell - p^{\ell-1}$ and choose a system $(\alpha_1, \dots, \alpha_j)$ of elements of $S_f$ inducing a partition of $S_f$ as in \S\ref{S: dynamical-recovery-GL}. Similarly, pick a system $(\beta_1, \dots, \beta_j)$ of elements of $S_H$ where we run through the torsion points of $H$ of exact order $p^\ell$. Such a system, in turn, induces a partition of $S_H$. For every integers $i \in \{1,\dots,j\}$ and $n \geqslant 0$, and for all $g \in G_L$, set
            \begin{equation}\label{fact-3}
                \tau((f^{\circ n} \circ [\chi_{f, \alpha_i}(g)]_f)(\alpha_i)) = (f_H^{\circ n} \circ [\chi_f(g)]_H)(\beta_i)
            \end{equation}
            and $\tau(0) = 0$. Fixing $n = 0$ and taking $i \in \{1,\dots,j\}$ in (\ref{fact-3}) furnishes a well-defined bijection $\tau \colon S_{f, \alpha_i} \rightarrow S_{H, \beta_i}$ since the $\mathbb{Z}_p$-iterates of $u$ act simply transitively on $S_{f, \alpha_i}$ and the same holds about $u_H$ and $S_{f, \beta_i}$. Varying $n \geqslant 0$ in (\ref{fact-3}) allows to reach the remaining points of $T_f$ and $\T_p(H)$. Therefore (\ref{fact-3}) defines a $G_L$-equivariant bijection $\tau \colon T_f \rightarrow \T_p(H)$ associating $f$ with $f_H$, and the statement follows by transport of structure.
        \end{proof}

        \begin{remark}
            A different choice of representatives for the partitions in the previous proof would provide the same $\mathbb{Z}_p[G_L]$-module structure on $T_f$, but would permute the roles of the individual elements. This is immaterial for our purpose since what matters to us are the group law and the property that the power series $f$ remains an endomorphism of $T_f$ after transport of structure, which are unaffected by such choices. Nevertheless, the latent formal group $F$ obtained in Theorem \ref{R: main-theorem} will satisfy $\T_p(F) = T_f$ as $G_L$-sets, but only $\T_p(F) \simeq T_f$ as $\mathbb{Z}_p$-modules.
        \end{remark}

\section{The latent formal group}

\noindent Possessing a $p$-adic Tate module and having access to the existence of a quasi-inverse to the equivalence in Paragraph \ref{S: formal-groups-situation} is not sufficient to solve Lubin's conjecture: it is not enough to work up to isomorphism; one must have equality. For this purpose, we make sure that $f$ remains an endomorphism at each step of our constructions, until we obtain a formal group. Recall that $f$ acts on $T_f$ by componentwise evaluation or, equivalently, by multiplication by its linear coefficient, and all its other coefficients are uniquely determined by the action of $\mathbb{Z}_p$ on $T_f$. Our strategy requires an explicit approach in integral $p$-adic Hodge theory. One such approach proceeds through the work of Kisin in \cite{Kis06} and \cite{Kis09} to obtain a formal Breuil-Kisin module, then via the one of Zink in \cite{Zin01} and \cite{Zin02} on displays, indicated by the dashed path in the diagram:
\[
    \begin{tikzcd}[row sep=11mm, column sep=13mm]
        & \REP_{\mathbb{Q}_p}^{\mathrm{cris},\mathrm{fgl}}(G_L) \ar[r, shift left=.6ex, "\D^*_\mathrm{cris}"]
        & \MF_L^{\mathrm{adm},\mathrm{fgl}}(\varphi) \ar[l, shift left=.6ex]
        \\
        \FGL/\mathcal{O}_L \ar[r, shift left=.6ex, "\T_p"]\ar[d, shift left=.6ex]
        & \REP_{\mathbb{Z}_p}^{\mathrm{cris},\mathrm{fgl}}(G_L) \ar[u, hookrightarrow, "\mathbb{Q}_p \otimes_{\mathbb{Z}_p} -"] \ar[r, shift left=.6ex, dashed, "\D_\mathfrak{S}^*"] \ar[l, shift left=.6ex]
        & \BT^{\mathrm{f}}_\mathfrak{S}(\varphi) \ar[u, hookrightarrow, "D"]\ar[d, dashed, shift left=.6ex, "S \otimes_{\varphi, \mathfrak{S}} -"] \ar[l, shift left=.6ex]
        \\
        (p\textbf{-}\mathbf{Div}/\mathcal{O}_L)^\mathrm{c} \ar[u, dashed, shift left=.6ex, "\text{comult.}"] \ar[rr, shift left=.6ex]
        & & S\textbf{-}\mathbf{Win}/\mathcal{O}_L \ar[ll, shift left=.6ex, dashed, "\mathrm{BT}\,\circ\,\mathrm{Dsp}"]\ar[u, shift left=.6ex]
    \end{tikzcd}
\]
where the categories and functors appearing therein will be defined progressively throughout this section. At each stage, we track the action of $f$ on the object being constructed and ensure that it indeed encodes the data of its other coefficients just as $T_f$ does. These precautions are necessary in order to conclude that the formal group obtained in the end is the latent one.

    \subsection{The associated Breuil-Kisin module}\label{S: latent-Breuil-Kisin-module} Recall that we let $L_0$ be the maximal unramified extension of $\mathbb{Q}_p$ inside $L$ and let $\sigma$ be its Frobenius. We have also fixed a uniformizer $\pi_L$ of $L$, and write $E(X)$ for its Eisenstein minimal polynomial over $L_0$. Denote by $L_\infty\!$ the extension generated over $L$ by the components of a compatible sequence of $p$-th roots of $\pi_L$ in $\mathfrak{m}_{\mathbb{C}_p\!}$. We define two subrings $\mathfrak{S} \subset \mathfrak{S}^\mathrm{ur}$ of $\W(\mathcal{R})$ as follows. Let
    \[
        \mathfrak{S} = \W(\mathbb{F}_q)[\![X]\!]
    \]
    be the Kisin ring, endowed with a $\sigma$-semilinear Frobenius endomorphism $\varphi$ defined by $\varphi(X) = X^p$. According to \cite[\S2.1.1]{Kis06}, there exists an embedding $\mathfrak{S} \hookrightarrow \W(\mathcal{R})$ that preserves the Frobenius. Let $\mathscr{E}$ be the fraction field of the $p$-adic completion of $\mathfrak{S}[1/X]$. The previous embedding extends to an embedding $\mathscr{E} \hookrightarrow \W(\Frac(\mathcal{R}))$, and we denote by $\mathscr{E}^\mathrm{ur}$ the maximal unramified extension of $\mathscr{E}$ inside $\W(\Frac(\mathcal{R}))[1/p]$ and by $\mathcal{O}_{\mathscr{E}^\mathrm{ur}}$ its ring of integers. We then define
    \[
        \mathfrak{S}^\mathrm{ur} = \mathcal{O}_{\mathscr{E}^\mathrm{ur}} \cap \W(\mathcal{R}).
    \]
    Since the absolute Galois group of $L_\infty$ fixes $\mathfrak{S}$, it acts naturally on $\mathfrak{S}^\mathrm{ur}$.

        \subsubsection{Breuil-Kisin modules} A Breuil-Kisin module $(\mathfrak{M}, \varphi)$ is a finite free $\mathfrak{S}$-module $\mathfrak{M}$ equipped with a $\varphi_\mathfrak{S}$-semilinear Frobenius endomorphism $\varphi \colon \mathfrak{M} \rightarrow \mathfrak{M}$ whose cokernel of its $\mathfrak{S}$-linearization $1 \otimes \varphi \colon \mathfrak{S} \otimes_{\mathfrak{S}, \varphi} \mathfrak{M} \rightarrow \mathfrak{M}$ is killed by $E(X)$. Together with $\varphi$-equivariant $\mathfrak{S}$-linear maps, these objects form the category $\BT_\mathfrak{S}(\varphi)$. Kisin obtained in \cite{Kis06} a functor $\D_\mathfrak{S}^*$ such that the following diagram commutes:
        \[
            \begin{tikzcd}[column sep=11mm, row sep=13mm]
                \REP_{\mathbb{Q}_p}^{\mathrm{cris},\{0,1\}}(G_L) \ar[r, "\D^*_\mathrm{cris}", shift left=.6ex]
                & \MF^{\mathrm{adm},\{0,1\}}_L(\varphi) \ar[l, shift left=.6ex]
                \\
                \REP_{\mathbb{Z}_p}^{\mathrm{cris},\{0,1\}}(G_L) \ar[r, shift left=.6ex, "\D_\mathfrak{S}^*"]\ar[u, hookrightarrow, "\mathbb{Q}_p \otimes_{\mathbb{Z}_p} -"]
                & \BT_\mathfrak{S}(\varphi) \ar[l, shift left=.6ex]\ar[u, hookrightarrow, "D"]
            \end{tikzcd}
        \]
        where $D$ is the functor defined in \cite[\S1]{Kis06} while ‘‘$\{0,1\}$'' stands for the weights or the jumps in $\{0,1\}$. The full subcategory of Breuil-Kisin modules corresponding to the representations of formal group type is denoted by $\BT_\mathfrak{S}^\mathrm{f}(\varphi)$, and its objects are called formal Breuil-Kisin modules.

        \subsubsection{Construction} Set
        \[
            M_f = \Hom_{\mathbb{Z}_p[G_{L_\infty}]}(T_f, \mathfrak{S}^\mathrm{ur})
        \]
        to be the free $\mathfrak{S}$-module of rank $1$ appearing in \cite[Lemma 2.1.10]{Kis06}. The action of $f$ on $T_f$ extends to $M_f$ to be the multiplication by its linear coefficient. Indeed, given a generator $\pi \mapsto s$ of $M_f$, one has $f(\pi \mapsto s) = f'(0)(\pi \mapsto s)$ because $f(\pi) = f'(0)\pi$. We do not loose any information on the remaining coefficients of $f$ since they are encoded in the action of $\mathbb{Z}_p \subset \mathfrak{S}$ on $M_f$. We let
        \[
            \mathfrak{M}_f = \D_\mathfrak{S}^*(T_f)
        \]
        which is a rank-one formal Breuil-Kisin module. By \cite[Lemma 2.1.10]{Kis06}, we may assume that $\mathfrak{M}_f \subseteq M_f$ as $\mathfrak{S}$-modules. Therefore the action of $f$ on $M_f$ restricts to $\mathfrak{M}_f$ and the inclusion guarantees that the latter encapsulates the other coefficients of the formal power series $f$.

    \subsection{The associated window}\label{S: latent-window} Let $S$ be the $p$-adic completion of the PD-envelope of $\W(\mathbb{F}_q)[X]$ with respect to the ideal $(E(X))$, and let $\Fil^1 S \subset S$ be the closure of the ideal generated by the divided powers of $E(X)$. As a topological rings, one has an identification $S/\Fil^1 S \simeq \mathcal{O}_L$. By definition, one has $\mathfrak{S} \subset S$ and the Frobenius $\varphi$ on $\mathfrak{S}$ extends to a $\sigma$-semilinear Frobenius endomorphism $\varphi$ on the ring $S$.

        \subsubsection{\texorpdfstring{$S$}{S}-windows} Let $R \in \{\mathcal{O}_L/(p^n)\,;\, n \geqslant 0\}$ be a ring. An $S$-window $(W, W', \varphi)$ over $R$ is a finite projective $S$-module $W$ together with a submodule $W'$ containing $(\Fil^1 S)W$ and a $\sigma$-semilinear Frobenius endomorphism $\varphi \colon W \rightarrow W$ such that:
        \begin{enumerate}
            \item $W/W'$ is a projective $R$-module,
            \item $\varphi(W') \subseteq pW$ and $W$ is generated by $\varphi(W) + \varphi/p(W')$ as an $S$-module,
            \item since $pW \subseteq (1 \otimes \varphi)(\varphi^*(W))$, where $\varphi^*(W) = S \otimes_{S, \varphi} W$, the map $1 \otimes \varphi$ is injective and there exists a unique $S$-linear map $\psi \colon W \rightarrow \varphi^*(W)$ such that $(1 \otimes \varphi) \circ \psi \colon W \rightarrow W$ is the multiplication-by-$p$ map. Then, for $n$ sufficiently large,
            \[
                \varphi^{n-1*}(\psi) \circ \varphi^{n-2*}(\psi) \circ \cdots \circ \psi \colon W \longrightarrow \varphi^{n*}(W)
            \]
            factors through $(p, \Fil^1 S)\varphi^{n*}(W)$.
        \end{enumerate}
        Together with $\varphi$-equivariant $S$-linear maps preserving the distinguished submodule, these objects form the category $S\textbf{-}\mathbf{Win}/R$. In the particular case of $R = \mathcal{O}_L$, the data of an $S$-window over $\mathcal{O}_L$ is equivalent to the data of a system $(W_n)_{n \geqslant 1}$, where $W_n$ is an $S$-window over $\mathcal{O}_L/(p^n)$ and is obtained from $W_{n+1}$ by base change for every integer $n \geqslant 1$. The extension
        \[
            S \otimes_{\varphi, \mathfrak{S}} - \colon \BT_\mathfrak{S}^\mathrm{f}(\varphi) \longrightarrow S\textbf{-}\mathbf{Win}/\mathcal{O}_L
        \]
        defined in \cite[\S1.2.3]{Kis09} is an equivalence of categories.

        \subsubsection{Construction} Set
        \[
            W_f = S \otimes_{\varphi, \mathfrak{S}} \mathfrak{M}_f
        \]
        to be the $S$-window over $\mathcal{O}_L$ associated to our formal Breuil-Kisin module. Since Zink's functors in \cite{Zin01} for constructing $p$-divisible groups are formulated in terms of the limit description, we shall work with this perspective. Hence let $(W_{f,n})_{n \geqslant 1}$ be the corresponding system. Upon passing to the quotient of $W_f$ with which $W_{f,n}$ is identified, we may assume that every $W_{f,n}$ is a quotient of $W_f$. The action of $f$ on  $\mathfrak{M}_f$ extends to an action of $f$ on $W_f$ to be the multiplication-by-$f'(0)$ map. This action descends compatibly to each quotient $W_{f,n}$. Taking the $W_{f,n}$ as quotients of $W_f$ guarantees that the remaining coefficients of $f$ are uniquely determined by the action of $\mathbb{Z}_p \subset S$ on each of these objects.

    \subsection{The associated \texorpdfstring{$p$}{p}-divisible group}\label{S: latent-p-divisible-group} Denote by $(p\textbf{-}\mathbf{Div}/\mathcal{O}_L)^\mathrm{c}$ the category of finite-height connected $p$-divisible groups over $\mathcal{O}_L$. In \cite{Zin01}, Zink introduced two functors, namely $\Dsp$ and $\bt$, such that
    \[
        \bt \circ \Dsp \colon S\textbf{-}\mathbf{Win}/\mathcal{O}_L \longrightarrow (p\textbf{-}\mathbf{Div}/\mathcal{O}_L)^\mathrm{c}
    \]
    is an equivalence. These functors are defined on $S$-windows over $\mathcal{O}_L$ expressed as a limit. Let $n \geqslant 1$ and $R_n = \mathcal{O}_L/(p^n)$. Set $P_{f,n} = \Dsp(W_{f,n}) = \W(R_n) \otimes_{\varkappa_n, S} W_{f,n}$ which is a $\W(R_n)$-module, with $\varkappa_n \colon S \rightarrow \W(R_n)$ the map of \cite[p. 499]{Zin01}. The action of $f$ on $W_{f,n}$ extends to an action on $P_{f,n}$ and, again, is the multiplication-by-its-linear-coefficient map. All the other coefficients of $f$ are uniquely determined by the action of $\mathbb{Z}_p \subseteq \W(R_n)$ on $P_{f,n}$. Then we set
    \[
        \Gamma_f = \bt(\Dsp(W_f)) = \varprojlim_{m \geqslant 1} \bt(P_{f,m})
    \]
    which is a height-one connected $p$-divisible group over $\mathcal{O}_L$. Let $\W_\mathrm{f}$ be the functor of Witt vectors with finite support, one defines $\Gamma_{f,n} = \bt(P_{f,n})$ as a certain quotient of the functor $\W_\mathrm{f}(-) \otimes_{\W(R_n)} P_{f,n}$. The action of $f$ on $P_{f,n}$ extends to the previous functor, descends to the quotient $\Gamma_{f,n}$ and is compatible with the transition maps. Consequently the action of $f$ passes to the limit $\Gamma_f$ and is the multiplication-by-$f'(0)$ map. The other coefficients of $f$ are uniquely determined by the action of $\mathbb{Z}_p$, that is, by the structural data of $\Gamma_f$ arising from the structure of $T_f$.

    \subsection{The main theorem}\label{S: latent-formal-group} We have thus constructed $\Gamma_f$, a height-one connected $p$-divisible group defined over $\mathcal{O}_L$, whose followed path is summarized below:
    \[
        \begin{tikzcd}[column sep=11mm, row sep=13mm]
            \REP_{\mathbb{Z}_p}^{\mathrm{cris},\mathrm{fgl}}(G_L) \ar[r] & \BT^{\mathrm{f}}_\mathfrak{S}(\varphi) \ar[d] & T_f \ar[r, mapsto] & \mathfrak{M}_f \ar[d, mapsto]\\
            (p\textbf{-}\mathbf{Div}/\mathcal{O}_L)^\mathrm{c} & S\textbf{-}\mathbf{Win}/\mathcal{O}_L \ar[l] & \Gamma_F & W_f \ar[l, mapsto]
        \end{tikzcd}
    \]
    and ensure that $f$ is an endomorphism of all of these objects. It follows that Lubin's conjecture holds in our setting.

    \begin{theorem}\label{R: main-theorem}
        There exists a formal group $F$ over $\mathcal{O}_K\!$ such that $f, u \in \End_{\mathcal{O}_K}(F)$.
    \end{theorem}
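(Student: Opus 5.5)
The proof will start from the height-one connected $p$-divisible group $\Gamma_f$ over $\mathcal{O}_L$ built in \S\ref{S: latent-p-divisible-group}. First, Tate's theorem \cite{Tat67} identifies connected $p$-divisible groups over $\mathcal{O}_L$ with formal groups of finite height over $\mathcal{O}_L$. Applying it to $\Gamma_f$ gives a height-one formal group $F_L$ over $\mathcal{O}_L$, and its comultiplication produces a formal group law in $\mathcal{O}_L[\![X,Y]\!]$. Along the path $T_f \mapsto \mathfrak{M}_f \mapsto W_f \mapsto \Gamma_f$, the action of $f$ was carried by multiplication by $f'(0)$, with all other coefficients encoded by the $\mathbb{Z}_p$-module structure. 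This structure is, by construction, the one transported to $T_f$ from $\T_p(H)$. The equivalences $\D_\mathfrak{S}^*$, $S\otimes_{\varphi,\mathfrak{S}}-$ and $\bt\circ\Dsp$ are quasi-inverse to the functors back to $\REP_{\mathbb{Z}_p}^{\mathrm{cris},\mathrm{fgl}}(G_L)$. So I will conclude that $\T_p(F_L)\simeq T_f$ as $\mathbb{Z}_p[G_L]$-modules, compatibly with the action of $f$ on both sides.

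Second, I will show that $f \in \End_{\mathcal{O}_L}(F_L)$ as an actual power series, not merely up to isomorphism. The equivalence $\T_p$ of Paragraph~\ref{S: formal-groups-situation} gives an endomorphism $[f'(0)]_{F_L}$ acting on $\T_p(F_L)$ as $f$ does on $T_f$. Under the identification, the torsion points of $F_L$ of level $n$ form the set $\Lambda_n(f)$ of roots of $f^{\circ n}$, and the element $[f'(0)]_{F_L}$ maps each level to the previous one exactly as $f$ does. Both $f$ and $[f'(0)]_{F_L}$ are power series over $\mathcal{O}_L$ that agree on the infinite set $\Lambda(f)$, which accumulates at the boundary of the disk. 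I would then argue via the Weierstrass preparation theorem applied to $f-[f'(0)]_{F_L}$, which has infinitely many zeros in $\mathfrak{m}_{\mathbb{C}_p}$: a nonzero series over $\mathcal{O}_L$ whose reduction modulo $\mathfrak{m}_L$ is nonzero has finitely many zeros, and one first divides by the appropriate power of $\pi_L$ so that the reduction is nonzero. This forces $f=[f'(0)]_{F_L}$.

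With that in hand, the logarithm of $F_L$ satisfies the defining property of $\Log_f$ from \S\ref{S: logarithm}, so by uniqueness it equals $\Log_f$. Hence $F_L$ coincides with $F(X,Y)=\Log_f^{\circ-1}(\Log_f(X)+\Log_f(Y))$, and $u=[u'(0)]_f$ is automatically an endomorphism. Third, for descent, note that $F$ is defined over $K$ because $\Log_f\in\mathscr{S}_0(K)$, and it is defined over $\mathcal{O}_L$ by the previous step. Its coefficients therefore lie in $K\cap\mathcal{O}_L=\mathcal{O}_K$. The same intersection argument shows that $f,u\in\End_{\mathcal{O}_K}(F)$.

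The main obstacle is the second step, namely justifying that the identification $\T_p(F_L)\simeq T_f$ is an equality of subsets of $\mathfrak{m}_{\mathbb{C}_p}$ rather than only an abstract isomorphism. If the torsion points of $F_L$ are just another $G_L$-stable set in bijection with $\Lambda(f)$, the interpolation argument fails. I would first try to handle this by tracking the period $t_f$: the logarithm of $F_L$, evaluated on the image of the universal $f^{\circ r}$-consistent sequence in $\AinfL$, should reproduce $t_f$. Comparing the two logarithms through their action on periods, as in Paragraph~\ref{S: formal-groups-situation}, would then give $\Log_{F_L}=\Log_f$ directly, bypassing the identification of torsion points.
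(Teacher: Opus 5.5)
Your first and third steps coincide with the paper's proof. In both, $\Gamma_f$ yields a height-one formal group $F_L$ over $\mathcal{O}_L$. Once $f\in\End_{\mathcal{O}_L}(F_L)$, uniqueness of $\Log_f$ forces $F_L=\Log_f^{\circ-1}(\Log_f(X)+\Log_f(Y))$, so the coefficients lie in $\mathcal{O}_L\cap K=\mathcal{O}_K$, and $u=[u'(0)]_f$ comes for free.

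The gap is the second step, exactly where you put it, and it is not a technicality. The chain $T_f\mapsto\mathfrak{M}_f\mapsto W_f\mapsto\Gamma_f$ only sees $T_f$ as an abstract $\mathbb{Z}_p[G_L]$-module, i.e.\ as $\mathbb{Z}_p(\chi_f)$; that structure was itself transported from $\T_p(H)$. Moreover, the coordinate $T$ on the Hopf algebra of $\Gamma_f$ is a free choice. For any invertible $\eta\in\mathscr{S}_0(\mathcal{O}_L)$, the group $F_L^\eta(X,Y)=\eta^{\circ-1}(F_L(\eta(X),\eta(Y)))$ is an equally legitimate output. Its Tate module is $G_L$-isomorphic to $T_f$ ``compatibly with $f$'', but that only ever means multiplication by $f'(0)$. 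Its torsion points are $\eta^{\circ-1}(F_L[p^n])$, and $[f'(0)]_{F_L^\eta}=\eta^{\circ-1}\circ[f'(0)]_{F_L}\circ\eta$. So nothing you have established puts the torsion points of $F_L$ on $\Lambda(f)$ with $[f'(0)]_{F_L}$ acting pointwise as $f$. Finding the one coordinate in which this happens is precisely what remains to be proved. Your Weierstrass interpolation is correct in itself, but it has nothing to start from.

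The fallback through $t_f$ does not close this. The periods of $\chi_f$ form the same line $L\,t_f$ for every $F_L^\eta$. An endomorphism acts on that line only through its linear coefficient (Paragraph~\ref{S: formal-groups-situation}), so comparing actions on periods cannot distinguish $\Log_{F_L}$ from $\Log_{F_L}\circ\eta$. Nor is $\Log_{F_L}(x_0)$ a period a priori. One has $g.\Log_{F_L}(x_0)=\Log_{F_L}([\chi_f(g)]_f(x_0))$, and this equals $\chi_f(g)\Log_{F_L}(x_0)$ only if $\Log_{F_L}$ intertwines $[\chi_f(g)]_f$ with multiplication by $\chi_f(g)$. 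That is essentially $\Log_{F_L}=\Log_f$ again, so the argument is circular.

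You also cannot borrow the missing step from the paper. Its proof settles it with ``by construction $f$ is an endomorphism of $\Gamma_f$ and therefore is an endomorphism of $F$''. This rests on the bookkeeping of \S\S\ref{S: latent-Breuil-Kisin-module}--\ref{S: latent-p-divisible-group}, where $f$ is carried along only as multiplication by $f'(0)$. That bookkeeping is invariant under $F\mapsto F^\eta$, so it is open to the same objection. What your proposal needs, and lacks, is an argument that uses the power series $f$ itself rather than only the isomorphism class of $\mathbb{Z}_p(\chi_f)$. Candidates would be its actual torsion points in $\mathfrak{m}_{\mathbb{C}_p}$, or evaluation maps such as $h\mapsto h(x_0)$ on $\Aty(\mathcal{O}_L)$.
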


    \begin{proof}
        The group law of $\Gamma_f$ induces a comultiplication on its Hopf algebra; this is a map $\mathcal{O}_L[\![T]\!] \rightarrow \mathcal{O}_L[\![X,Y]\!]$ given by sending $T$ to $F(X,Y)$ a height-one formal group defined over $\mathcal{O}_L$. By construction $f$ is an endomorphism of $\Gamma_f$ and therefore is an endomorphism of $F$. By the uniqueness property stated in \S\ref{S: logarithm}, we deduce that $F$ is defined over $\mathcal{O}_L \cap K = \mathcal{O}_K$. Finally, as $u$ is defined over $\mathcal{O}_K$ and commutes with $f$, it is necessarily an $\mathcal{O}_K$-endomorphism of $F$ as well.
    \end{proof}

\end{document}